\DeclareMathOperator{\diam}{diam}
\title{Tangents and rectifiability of Ahlfors regular Lipschitz differentiability spaces} 
\author{Guy C. David}
\date{May 10, 2014}
\begin{document}
\maketitle
\newtheorem{thm}{Theorem}[section]
\newtheorem{lemma}[thm]{Lemma}
\newtheorem{prop}[thm]{Proposition}
\newtheorem{cor}[thm]{Corollary}
\newtheorem{claim}[thm]{Claim}

\theoremstyle{remark}
\newtheorem{rmk}[thm]{Remark}

\theoremstyle{definition}
\newtheorem{definition}[thm]{Definition}

\numberwithin{equation}{section}

\newcommand{\obar}[1]{\overline{#1}}
\newcommand{\haus}[1]{\mathcal{H}^n(#1)}
\newcommand{\prob}{\mathbb{P}}
\newcommand{\Tan}{\textnormal{Tan}}
\newcommand{\LIP}{\text{LIP}}
\newcommand{\dist}{\text{dist}}

\begin{abstract}
We study Lipschitz differentiability spaces, a class of metric measure spaces introduced by Cheeger in \cite{Ch99}. We show that if an Ahlfors regular Lipschitz differentiability space has charts of maximal dimension, then, at almost every point, all its tangents are uniformly rectifiable. In particular, at almost every point, such a space admits a tangent that is isometric to a finite-dimensional Banach space. In contrast, we also show that if an Ahlfors regular Lipschitz differentiability space has charts of non-maximal dimension, then these charts are strongly unrectifiable in the sense of Ambrosio-Kirchheim.
\end{abstract}

\section{Introduction}

In 1999, Cheeger \cite{Ch99} introduced a type of metric measure space now known as a ``Lipschitz differentiability space'' (Cheeger did not use this name, which was coined by Bate \cite{Ba13}). These spaces are those permitting a version of Rademacher's theorem: real-valued Lipschitz functions are differentiable almost everywhere with respect to certain ``charts'' covering the space.

Lipschitz differentiability spaces have by now been widely studied; see for example \cite{Ke03}, \cite{Ke03_LD}, and \cite{Ba13}. Well-known examples include Euclidean spaces and Carnot groups. More surprisingly, Cheeger \cite{Ch99} showed that any of the so-called ``PI spaces'' are Lipschitz differentiability spaces; these are doubling metric measure spaces admitting a Poincar\'e inequality in the sense of \cite{HK98}. To find yet more examples, one can observe that any positive-measure subset of a Lipschitz differentiability space is itself a Lipschitz differentiability space (see \cite{Ba13}), though it may certainly fail to admit a Poincar\'e inequality. This indicates that Lipschitz differentiability spaces may have no nice global structure.

The relevant questions for Lipschitz differentiability spaces are therefore local or infinitesimal. In this paper, we will study the Gromov-Hausdorff tangents of these spaces, which describe their infinitesimal behavior. Our main result gives a condition under which such spaces admit Euclidean tangents at almost every point.

\begin{thm}\label{Rntangent}
Let $X$ be a complete, Ahlfors $n$-regular Lipschitz differentiability space containing a chart $U$ of dimension $n$. Then for $\mathcal{H}^n$-almost every point $x\in U$, every tangent of $X$ at $x$ is uniformly rectifiable. In particular, at almost every point of $U$, there is a tangent of $X$ that is bi-Lipschitz equivalent to $\mathbb{R}^n$.

The constants in the uniform rectifiability depend on the point $x$ but not on the particular sequence of scales defining the tangent.
\end{thm}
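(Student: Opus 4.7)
The strategy is to upgrade the chart differentiability of $\phi : U \to \mathbb{R}^n$ to a quantitative ``big pieces of bi-Lipschitz images of $\mathbb{R}^n$'' property at all small scales around $\mathcal{H}^n$-a.e.\ $x \in U$. Uniform rectifiability of every tangent then follows by passing this property through to Gromov--Hausdorff limits, while the bi-Lipschitz tangent is obtained by choosing scales along which the density of the bi-Lipschitz set is nearly maximal.

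The key analytic step is non-degeneracy: I would show that at $\mathcal{H}^n$-a.e.\ $x \in U$, the metric derivative of $\phi$ at $x$ is a non-degenerate linear map (in the appropriate metric sense), so that the pointwise Jacobian $J_\phi(x)$ is positive. The argument is by contradiction. If $J_\phi \equiv 0$ on a set $A$ of positive measure, then at a density point $x$ of $A$, the image $\phi(B(x,r) \cap A)$ has $\mathcal{H}^n$-measure $o(r^n)$ in $\mathbb{R}^n$. Combined with $\mathcal{H}^n(A \cap B(x,r)) \gtrsim r^n$ from Ahlfors regularity, a Vitali covering of $A$ by small balls, together with their images in $\mathbb{R}^n$, would violate the Lebesgue density theorem in $\mathbb{R}^n$. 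The differentiability of \emph{all} Lipschitz functions against $\phi$ (not just $\phi$ itself) is needed here to rule out measure-theoretic cancellation between directions.

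Granted non-degeneracy, a Lusin/Egorov argument produces a set $E_x$ of positive lower $\mathcal{H}^n$-density at $x$ on which $\phi$ is $L$-bi-Lipschitz. Ahlfors regularity then yields $\mathcal{H}^n(E_x \cap B(x,r)) \gtrsim r^n$ uniformly for small $r$. For any tangent $(Y, y) = \lim (X, x, d/r_i)$, passing to a subsequence the rescaled sets $E_x$ converge in the Hausdorff sense to a subset $F \subset Y$, and $\phi$ blows up to a $1$-Lipschitz map $\psi : Y \to \mathbb{R}^n$ that is $L$-bi-Lipschitz on $F$. The density estimate passes to $F$ at every scale in $Y$, so $Y$ contains big pieces of $L$-bi-Lipschitz images of $\mathbb{R}^n$ at every location and scale, with constants depending only on $L$ and the regularity constants (hence only on $x$). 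This is uniform rectifiability. Selecting $r_i$ so that the density of $E_x$ in $B(x, r_i)$ tends to the essential supremum yields a tangent in which $F$ exhausts a neighborhood of $y$, giving the bi-Lipschitz equivalence with an open subset of $\mathbb{R}^n$, and hence (by regularity) with $\mathbb{R}^n$.

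The main obstacle is the non-degeneracy step. In the Euclidean setting it follows from the classical area formula, but in a general Ahlfors regular Lipschitz differentiability space one must instead exploit the full differentiability structure of the chart together with the measure regularity. This is also the point at which the matching of the chart dimension with the regularity dimension $n$ is used in an essential way: if the dimensions did not match, there would be no contradiction to extract from a compressing $\phi$, which is precisely the phenomenon that will appear in the non-maximal case mentioned in the abstract.
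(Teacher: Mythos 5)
Your strategy hinges on a non-degeneracy claim — that at a.e.\ $x\in U$ one can find a set $E_x$ of positive lower density at $x$ on which $\phi$ itself is $L$-bi-Lipschitz — and that step has a genuine gap. Two problems. First, the ``metric derivative'' of $\phi$ you invoke is not an object supplied by the Lipschitz differentiability structure. The chart structure tells you how a Lipschitz $f\colon X\to\mathbb{R}$ expands relative to $\phi$ in the target $\mathbb{R}^n$; applying it to $\phi$ itself gives the identity and no information about how $|\phi(y)-\phi(x)|$ compares to $d(y,x)$. There is no canonical Jacobian here, and your proposed contradiction via Vitali and the Lebesgue density theorem in $\mathbb{R}^n$ does not go through: the image of a set of measure $\gtrsim r^n$ under a Lipschitz map into $\mathbb{R}^n$ is perfectly allowed to have small $\mathcal{H}^n$-measure, so nothing is violated. (Showing that $\phi$ has a non-null image is true but already requires a serious argument; in the paper it ultimately comes out of Corollary \ref{lipquotfacts}, and via a different route it is a recently-announced result credited to Cs\"ornyei--Jones in Remark \ref{CsornyeiJones}.) Second, even granting positive Jacobian in some sense, the Lusin/Egorov step from positive Jacobian to ``bi-Lipschitz on a set of positive density'' is exactly the kind of inverse-function argument that has no metric-space analogue. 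Indeed, if your step (2) held, you would have shown that the chart $U$ itself is $n$-rectifiable, which the paper explicitly flags as an open question (see the remark at the end of the outline). You are, in effect, assuming a stronger theorem than the one to be proven.

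The paper works around this by never trying to prove anything quantitative at the level of $X$ itself. Instead it blows up first and works entirely in the tangent. The inputs are Bate's theorem (Theorem \ref{Bate}), giving $n$ Alberti ``broken curves'' through a.e.\ point of $U$ with independent derivatives of $\phi$; and a version of Le Donne's base-point-shift principle (Proposition \ref{movedtangent}), which lets one move the base point of a tangent without leaving the class of tangents. Combining these, Proposition \ref{nlines} shows that in \emph{every} tangent $(Y,y,G)$, \emph{every} point of $Y$ carries $n$ genuine bi-Lipschitz lines along which $G$ is affine with independent slopes. A short concatenation argument (Corollary \ref{lipquot}) then shows $G$ is a Lipschitz quotient map onto $\mathbb{R}^n$, and uniform rectifiability of $Y$ follows from G.\ David's theorem on Lipschitz quotient maps from Ahlfors $n$-regular spaces to $\mathbb{R}^n$ (Theorem \ref{davidtheorem} and Corollary \ref{lqbilip}). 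Your final step — a further blowup at a density point of the bi-Lipschitz piece to produce a tangent that is bi-Lipschitz to all of $\mathbb{R}^n$ — is correct and is essentially how the paper concludes; but the body of your argument would need to be rebuilt on the Alberti-curve/Lipschitz-quotient scaffolding rather than on a Jacobian-and-Egorov scheme imported from Euclidean space.
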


If one applies Kirchheim's metric differentiation theorem (see \cite{Ki94}, Theorem 2) to this fact, one immediately obtains the following corollary:

\begin{cor}\label{banachtangent}
Let $X$ be a complete, Ahlfors $n$-regular Lipschitz differentiability space containing a chart $U$ of dimension $n$. Then at $\mathcal{H}^n$-almost every point $x\in U$, there is a tangent of $X$ that is isometric to $\mathbb{R}^n$ equipped with a metric induced by a norm.
\end{cor}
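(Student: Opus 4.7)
The plan is to chain Theorem~\ref{Rntangent} together with Kirchheim's metric differentiation theorem, which assigns a seminorm-valued ``derivative'' to a Lipschitz map from $\mathbb{R}^n$ into an arbitrary metric space at almost every point of the source.

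To begin, I would apply Theorem~\ref{Rntangent} to obtain, at $\mathcal{H}^n$-a.e. $x\in U$, a tangent $(Y,d_Y,y_0)$ of $X$ at $x$ together with a bi-Lipschitz homeomorphism $f:\mathbb{R}^n\to Y$. Kirchheim's theorem, applied to $f$, then yields at $\mathcal{L}^n$-a.e. $p\in\mathbb{R}^n$ a seminorm $N_p$ on $\mathbb{R}^n$ satisfying
$$d_Y(f(p+rv),f(p+rw)) = rN_p(v-w) + o(r)$$
locally uniformly in $v,w$ as $r\to 0$. Because $f$ is bi-Lipschitz, $N_p$ actually satisfies a lower bound $N_p(u)\geq c|u|$ and is therefore a genuine norm at a.e. such $p$.

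Next, I would package this pointwise asymptotic estimate into pointed Gromov-Hausdorff convergence. For a fixed good $p$, the maps $v\mapsto f(p+rv)$ provide almost-isometries from bounded subsets of $(\mathbb{R}^n,N_p)$ into the rescaled spaces $(Y,d_Y/r,f(p))$, with error tending to $0$ as $r\to 0$. Consequently $(\mathbb{R}^n,N_p,0)$ is a pointed Gromov-Hausdorff tangent of $Y$ at $f(p)$.

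Finally, I would invoke the ``tangent of a tangent is a tangent'' principle: in a doubling space such as our Ahlfors $n$-regular $X$, every Gromov-Hausdorff tangent of the tangent $Y$ taken at any basepoint is itself a tangent of $X$ at the original point $x$. The main delicate point is exactly this last step, since the new basepoint $f(p)$ need not equal $y_0$; one needs a diagonal extraction that combines the scales producing $Y$ as a tangent of $X$ with those producing $(\mathbb{R}^n,N_p,0)$ as a tangent of $Y$, together with compactness of the pointed Gromov-Hausdorff topology on uniformly doubling pointed spaces. Once this is done, the resulting tangent of $X$ at $x$ is $(\mathbb{R}^n,N_p,0)$, which is precisely $\mathbb{R}^n$ equipped with a metric induced by a norm, yielding the corollary.
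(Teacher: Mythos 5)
Your proposal is correct and follows essentially the same route as the paper: apply Theorem~\ref{Rntangent} to obtain a tangent bi-Lipschitz to $\mathbb{R}^n$, then Kirchheim's metric differentiation theorem, and then the ``tangents of tangents are tangents'' principle (which the paper cites from Le Donne \cite{Le11} rather than re-deriving via diagonal extraction as you sketch, but this is the same underlying idea).
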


\begin{rmk}\label{upperbound}
If $X$ is an Ahlfors $n$-regular Lipschitz differentiability space, then the dimension $k$ of any chart $(U,\phi \colon U\rightarrow\mathbb{R}^k)$ satisfies $k\leq n$ (see Corollary \ref{assouad} below), although this inequality may be strict. Thus, Theorem \ref{Rntangent} and Corollary \ref{banachtangent} are about the case in which the dimension is extremal. 
\end{rmk}

In contrast to Theorem \ref{Rntangent}, one may ask whether a differentiable structure of dimension strictly less than the Ahlfors regularity dimension implies a type of unrectifiability of the space. This is in fact the case.

\begin{thm}\label{strongunrectthm}
Suppose that $s>0$ and that $X$ is an Ahlfors $s$-regular Lipschitz differentiability space containing a chart $U$ of dimension $k$, with $k<s$. Then $U$ is strongly $s$-unrectifiable in the sense of Ambrosio-Kirchheim.
\end{thm}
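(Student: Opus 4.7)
I argue by contradiction: suppose $U$ fails to be strongly $s$-unrectifiable, so there exist an integer $m$, a Borel set $K\subset\mathbb{R}^m$, and a Lipschitz map $f\colon K\to X$ with $\mathcal{H}^s(f(K)\cap U)>0$; restricting $K$, I take $f(K)\subset U$. Since $f$ is Lipschitz and $X$ is Ahlfors $s$-regular, positivity of $\mathcal{H}^s(f(K))$ forces $s\leq m$. Using the integer-dimensional character of Lipschitz Euclidean images (Ambrosio--Kirchheim) together with the finite $\mathcal{H}^s$-mass of bounded sets in $X$, I first reduce to the clean case where $s$ is a positive integer and, via an Eilenberg slicing argument, where $m=s$.

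The next step extracts a bi-Lipschitz piece. Kirchheim's metric differentiation theorem provides, at $\mathcal{L}^m$-a.e.\ $y\in K$, a seminorm $\mathrm{md}_y f$ on $\mathbb{R}^m$ governing the infinitesimal behavior of $f$. The Ambrosio--Kirchheim area formula combined with $\mathcal{H}^m(f(K))>0$ forces $\mathrm{md}_y f$ to be a genuine norm on a subset $K'\subset K$ of positive Lebesgue measure. A Lusin-type argument then yields a Lebesgue density point $y_0\in K'$ and a radius $r>0$ on which $f|_{K\cap B(y_0,r)}$ is bi-Lipschitz onto its image in $X$.

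Now I pull the chart back through $f$. Component-wise McShane extension of the inverse map $f^{-1}\colon f(K\cap B(y_0,r))\to\mathbb{R}^m$ yields Lipschitz functions $h_1,\dots,h_m\colon X\to\mathbb{R}$ satisfying $h_i(f(y))=y_i$ on a neighborhood of $y_0$ in $K$. Setting $g=\phi\circ f$ and refining $y_0$ so that $g$ is classically Rademacher-differentiable there and each $h_i$ is Cheeger-differentiable with respect to $\phi$ at $x_0=f(y_0)$ with gradient $a_i\in\mathbb{R}^k$, the Cheeger relation composed with the Lipschitz $f$ gives
$$y_i-(y_0)_i=a_i\cdot(g(y)-g(y_0))+o(|y-y_0|)\qquad(i=1,\dots,m),$$
since the Cheeger remainder $o(d_X(f(y),f(y_0)))$ is absorbed into $o(|y-y_0|)$ via the Lipschitz bound on $f$. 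Comparing classical differentials at $y_0$ (using that $y_0$ is a Lebesgue density point of $K$ so the difference quotients access all directions) produces $I_m=A\cdot Dg(y_0)$, where $A$ is the $m\times k$ matrix with rows $a_i^\top$. But $Dg(y_0)$ has rank at most $k<m=s$, contradicting $\mathrm{rank}(I_m)=m$.

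The main obstacle is the opening reduction: careful measure-theoretic work is needed to pass from the a priori messy Lipschitz map $f$ to a nondegenerate, bi-Lipschitz piece living over an integer-dimensional Euclidean source, since the chart-based contradiction depends on this clean structure. A secondary subtlety is that the chart property governs differentiability of each fixed Lipschitz function almost everywhere, so one must apply it to the explicit finite family $\{h_1,\dots,h_m\}$ rather than to an uncountable class of test functions.
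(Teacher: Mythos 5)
Your proof opens with the wrong negation. Definition~\ref{strongunrectdef} says $U$ is strongly $s$-unrectifiable if $\mathcal{H}^s(f(U))=0$ for every $N$ and every Lipschitz map $f\colon U\to\mathbb{R}^N$ --- maps \emph{out of} the metric space into Euclidean space. Negating this gives a Lipschitz $f\colon U\to\mathbb{R}^N$ with $\mathcal{H}^s(f(U))>0$. Instead you negate the statement that every Lipschitz $f\colon K\to X$ ($K\subset\mathbb{R}^m$) has $\mathcal{H}^s(f(K))=0$, which is the definition of \emph{pure} $s$-unrectifiability, not strong $s$-unrectifiability. The paper is explicit that these are not equivalent: strong implies pure (Lemma~5.2 of \cite{AK00}) but the converse fails --- a positive-$\mathcal{H}^s$-measure purely unrectifiable subset of $\mathbb{R}^n$ is never strongly unrectifiable since the identity map already violates the definition. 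So even if every step in your argument were carried out rigorously, you would have proven a strictly weaker statement, and you would not in fact have been able to reduce to $s=m\in\mathbb{Z}$ for non-integer $s$, a case the theorem treats uniformly.

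The paper's actual argument runs in the opposite direction and is more elementary. Given a Lipschitz $f=(f_1,\dots,f_N)\colon U\to\mathbb{R}^N$, one applies Cheeger differentiability to each component $f_i$ with respect to the chart $\phi\colon U\to\mathbb{R}^k$. At a differentiability point $x$, the image of a small ball $B(x,r)$ lies within $\delta r$ of the $k$-dimensional affine set $f(x)+Df_x[\mathbb{R}^k]\subset\mathbb{R}^N$, where $Df_x$ is the $N\times k$ matrix of Cheeger differentials. Covering each such image piece by $\approx\delta^{-k}$ balls of radius $2\delta r$, and using Ahlfors $s$-regularity to control $\sum r_j^s$ via a disjoint covering, yields $\mathcal{H}^s_\infty(f(V))\lesssim\delta^{s-k}\to 0$ as $\delta\to 0$, precisely because $k<s$. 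No metric differentiation of $f$, no area formula, no bi-Lipschitz piece extraction, and no rank comparison are needed; the whole point is that a rank-$k$ differential cannot produce a positive-$\mathcal{H}^s$-content image when $s>k$. If you want to salvage your approach, you would need to start from a Lipschitz map $U\to\mathbb{R}^N$ and notice that the $k$-dimensionality of the Cheeger differential flattens its image.
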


All the relevant definitions will be given below.

\subsection*{Acknowledgments}
The author wishes to thank Mario Bonk for many helpful discussions. He is also grateful for conversations with Raanan Schul, David Bate, and Enrico Le Donne. The author was partially supported by NSF grant DMS-1162471 and by the Winter 2014 trimester on Random Walks and Asymptotic Geometry of Groups at the Institut Henri Poincar\'e in Paris.

\subsection{Notation and definitions}
We will denote metric spaces by pairs $(X,d)$ and metric measure spaces by triples $(X,d,\mu)$. When the metric (and measure) are understood from context we will denote such a space simply by $X$. If $X$ is a metric space we may also denote the metric on $X$ by $d_X$. Our metric spaces are not necessarily assumed to be complete unless explicitly specified. Our measures $\mu$ will always be Borel regular measures, but they also are not necessarily assumed to be complete measures.

For a real number $\lambda>0$, the rescaled metric space $(X,\lambda d)$ will be written $\lambda X$. A \textit{pointed metric space} is a pair $(X,x)$, where $X$ is a metric space and $x\in X$ is a point, typically called the the base point.  We denote open and closed balls in $X$ by $B(x,r)$ and $\overline{B}(x,r)$, respectively, i.e. we have
$$ B(x,r) = \{y\in X: d(x,y)<r\} $$
and
$$ \overline{B}(x,r) = \{y\in X: d(x,y)\leq r\}. $$

If $E$ is a subset of a metric space $(X,d)$ and $x\in X$, then we define
$$\dist(x, E) = \inf \{d(x,y): y\in E\}.$$

Recall that if $(X,d)$ and $(Y,\rho)$ are metric spaces, then a mapping $f\colon X\rightarrow Y$ is \textit{Lipschitz} if there is a constant $L$ such that
\begin{equation}\label{lipdef}
\rho(f(x),f(x'))\leq L d(x,x')
\end{equation}
for any two points $x,x'\in X$. We denote the infimal value of $L$ such that equation (\ref{lipdef}) holds by $\LIP(f)$. The mapping $f$ is called \textit{bi-Lipschitz} if there is a constant $L\geq 1$ such that
$$ L^{-1}d(x,x') \leq \rho(f(x),f(x'))\leq L d(x,x') $$
for any two points $x,x'\in X$. Two spaces are said to be \textit{bi-Lipschitz equivalent} if there is a bi-Lipschitz map of one onto the other.

A metric measure space $(X,d,\mu)$ is called \textit{doubling} if $\mu$ is a non-trivial Borel regular measure and there exists a constant $C>0$ such that
$$ \mu(B(x,2r)) \leq C\mu(B(x,r)) $$
for all $x\in X$ and $r>0$.

If $\mu$ is a doubling measure on the metric space $(X,d)$, then $(X,d)$ is a \textit{doubling metric space}, which means that there exists a constant $N>0$, depending only on the doubling constant associated to $\mu$, such that every ball of radius $2r$ in $X$ can be covered by at most $N$ balls of radius $r$. A collection of metric spaces is called \textit{uniformly doubling} if every space in the collection is doubling with a uniform upper bound on the constant $N$. See \cite{He01} for more on doubling measures and doubling metric spaces. 

We write $\mathcal{H}^n$ for $n$-dimensional Hausdorff measure. A metric space $(X,d)$ is called \textit{Ahlfors $n$-regular} if there is a constant $C>0$ such that
\begin{equation}\label{AR}
C^{-1} r^n \leq \mathcal{H}^n(\obar{B}(x,r)) \leq  Cr^n
\end{equation}
for all $x\in X$ and $r\leq\diam X$. The two-sided bound (\ref{AR}) automatically implies that the measure $\mathcal{H}^n$ is doubling on $X$. As above, we call a collection of spaces \textit{uniformly Ahlfors $s$-regular} if there are all Ahlfors $s$-regular with the same constant $C$.

\begin{definition}
A \textit{Lipschitz differentiability space} is a metric measure space $(X,d,\mu)$ satisfying the following condition: There are positive measure sets (``charts'') $U_i$ covering $X$, positive integers $n_i$ (the ``dimensions of the charts''), and Lipschitz maps $\phi_i\colon U_i\rightarrow\mathbb{R}^{n_i}$ with respect to which any Lipschitz function is differentiable almost everywhere, in the sense that for $\mu$-almost every $x\in U_i$, there exists a unique $df_x\in\mathbb{R}^{n_i}$ such that
$$ \lim_{y\rightarrow x} \frac{|f(y) - f(x) - df_x\cdot(\phi_i(y)-\phi_i(x))|}{d(x,y)} = 0. $$
Here $df_x\cdot(\phi_i(y)-\phi_i(x))$ denotes the standard scalar product between elements of $\mathbb{R}^{n_i}$.
\end{definition}
In the language of \cite{Ke03}, these are spaces admitting a ``strong measurable differentiable structure''.

A \textit{tangent} of $X$ is a complete metric space that is a pointed Gromov-Hausdorff limit of pointed metric spaces of the form $(X,\lambda_j^{-1} d, x)$, where $\lambda_j\rightarrow 0$. We will say more about tangents and Gromov-Hausdorff convergence in Section \ref{GHspacefunctions} below.

The following definition is due to David\footnote{The Guy David mentioned here and in references \cite{Da88} and \cite{DS97} is a professor at Universit\'e Paris-Sud and has no relation to the author of this paper, who is a graduate student at UCLA. The author wishes to apologize for any confusion generated by this amusing coincidence.} and Semmes and is a strong quantitative version of rectifiability.
\begin{definition}
A metric space is \textit{uniformly rectifiable} (in dimension $n$) if it is Ahlfors $n$-regular and there are constants $\alpha,\beta>0$ such that for every ball $B$ of radius $r$, there is a subset $E\subset B$ with $\haus{E} \geq \alpha \haus{B}$ and a mapping $f\colon E\rightarrow\mathbb{R}^n$ that is $\beta$-bi-Lipschitz.
\end{definition}

Finally, we define the notion of strong unrectifiability used in Theorem \ref{strongunrectthm}. This definition was introduced by Ambrosio and Kirchheim in \cite{AK00}.

\begin{definition}\label{strongunrectdef}
For $s>0$, a metric space $X$ is said to be \textit{strongly $s$-unrectifiable} if
$$ \mathcal{H}^s(f(X)) = 0 $$
for every $N\in\mathbb{N}$ and every Lipschitz map $f\colon X\rightarrow\mathbb{R}^N$,
\end{definition}

By Lemma 5.2 of \cite{AK00}, if $n\in\mathbb{N}$ then any strongly $n$-unrectifiable space is also purely $n$-unrectifiable, in the sense that $H^n(f(E))=0$ whenever $E\subseteq \mathbb{R}^n$ and $f\colon E\rightarrow X$ is Lipschitz. However, the converse is not true. Indeed, there are well-known examples of purely unrectifiable subsets of Euclidean space, but clearly no subset of Euclidean space with positive $\mathcal{H}^s$-measure can be strongly $s$-unrectifiable.

In \cite{AK00}, Theorem 7.4, Ambrosio and Kirchheim construct examples of strongly $s$-unrectifiable metric spaces with positive $\mathcal{H}^s$-measure for every dimension $s>0$. Theorem \ref{strongunrectthm} shows that Ahlfors $s$-regular Lipschitz differentiability spaces with charts of dimension less than $s$ provide more examples of strongly unrectifiable spaces. (Note that, by Remark \ref{upperbound}, any Ahlfors $s$-regular Lipschitz differentiability space with $s\notin \mathbb{Z}$ satisfies the condition automatically.) In addition to all non-abelian Carnot groups, there are now numerous other interesting constructions of such spaces, including those of Bourdon-Pajot \cite{BP99}, Laakso \cite{La00}, and Cheeger-Kleiner \cite{CK13_PI}.

\subsection{Outline of the proof of Theorem \ref{Rntangent}}
Here we give a brief summary of the proof of Theorem \ref{Rntangent}. The starting point is a result of Bate, Theorem \ref{Bate}, that says that in a Lipschitz differentiability space, a generic point of a chart $(U, \phi\colon U\rightarrow\mathbb{R}^n)$ admits $n$ distinct ``broken curves'' through it, along which $\phi$ is differentiable with derivatives pointing in $n$ independent directions.

By modifying an idea of \cite{Le11}  --- that a tangent remains a tangent after a change of base point --- we upgrade this to a special property of the tangents $(Y,y)$ of $(X,x)$. Namely, such a tangent admits a Lipschitz map $G\colon Y\rightarrow\mathbb{R}^n$ (which comes from blowing up $\phi$) such that \textit{every} $z\in Y$ admits $n$ bi-Lipschitz \textit{lines} through it, pointing in ``independent'' directions, on which $G$ is linear. (This is weaker than but similar to Cheeger's notion of a ``generalized linear'' function; see \cite{Ch99}, Section 8.)

By a simple argument, such a map must be a Lipschitz quotient map, i.e., $G(B(x,r))\supseteq B(G(x), cr)$ for some constant $c>0$ and every ball $B(x,r)$. We can then appeal to a theorem of David \cite{Da88}, which implies that such a map from an Ahlfors $n$-regular metric space to $\mathbb{R}^n$ must be bi-Lipschitz on a large subset of every ball. This yields uniform rectifiability of the tangent.

To obtain a single tangent that is bi-Lipschitz equivalent to $\mathbb{R}^n$, we can take a further tangent at a point of density of such a subset. As tangents of tangents are tangents (see \cite{Le11} again), this yields a bi-Lipschitz map from a tangent of $X$ onto $\mathbb{R}^n$.

\begin{rmk}
We do not know, though it is a natural conjecture, whether Theorem \ref{Rntangent} can be strengthened to show that an $n$-dimensional chart $U$ in an Ahlfors $n$-regular Lipschitz differentiability space is itself is $n$-rectifiable. It is possible to show that $U$ is $n$-rectifiable if it admits a bi-Lipschitz embedding into some Euclidean space (see Corollary \ref{embedding} below).
\end{rmk}

We now present the details. In Section \ref{GHspacefunctions} we define the version of Gromov-Hausdorff convergence used in the paper, along with a variant which includes converging Lipschitz functions as well as spaces. In Section \ref{tangentsection} we extend a result of Le Donne about tangent spaces to this setting. Sections \ref{lipdifftangent} and \ref{LQsection} contain the proof that, in doubling Lipschitz differentiability spaces, blow-ups of the coordinate mappings are Lipschitz quotient maps. Sections \ref{recttangent} and \ref{unrect} contain the proofs of Theorem \ref{Rntangent} and Theorem \ref{strongunrectthm}, respectively. Finally, in Section \ref{additionalcors} we present some further corollaries: non-embedding results analogous to those for PI spaces, a sharp dimension bound for differentiable structures, and a rigidity result for Lipschitz differentiability spaces admitting quasi-M\"obius symmetries, in the spirit of Bonk-Kleiner \cite{BK02_rigidity}. 

\section{Gromov-Hausdorff convergence of space-functions}\label{GHspacefunctions}
We will now define Gromov-Hausdorff convergence, first for sequences of metric spaces and then for pairs consisting of a metric space and a Lipschitz function. Our version does not differ materially from that used in, for example, \cite{BBI01} or \cite{KM11}. The following preliminary definition will be useful.

\begin{definition}
A map $\phi \colon (X,d,x)\rightarrow (Y,d',y)$ between pointed metric spaces is called an \textit{$\epsilon$-isometry} if 
\begin{enumerate}[(i)]
\item \label{isom1} For all $a,b \in B_X(x,1/\epsilon)$, we have $|d'(\phi(a),\phi(b)) - d(a,b)| < \epsilon$, and
\item \label{isom2} for all $\epsilon \leq r\leq 1/\epsilon,$ we have $N_\epsilon(\phi(B_X(x,r))) \supseteq B_Y(y,r-\epsilon)$.
\end{enumerate}
Here $N_\epsilon(E)$ denotes the open $\epsilon$-neighborhood of a subset $E$ in a metric space $Y$. Note that we do not ask that $\phi(x)=y$, although it follows from the definition that $d'(\phi(x),y)\leq 2\epsilon$.
\end{definition}

A sequence $\{(X_i, x_i)\}$, $i\in\mathbb{N}$, of pointed metric spaces converges to a metric space $(X,x)$ in the pointed Gromov-Hausdorff sense if for all $\epsilon>0$ there exists $i_0\in\mathbb{N}$ such that, for all $i>i_0$, there are $\epsilon$-isometries
$$\phi_i:(X_i, x_i) \rightarrow (X,x) \text{ and } \psi_i: (X,x)\rightarrow (X_i, x_i).$$
If a sequence of pointed metric spaces is uniformly doubling, then it has a subsequence that converges in the pointed Gromov-Hausdorff sense (see, e.g., \cite{BBI01}, Theorem 8.1.0). This notion of convergence can be associated to a distance function, as we indicate below.

Slightly modifying a definition of Keith \cite{Ke03}, we will call a $(X,x,f)$ a \textit{space-function} if $(X,x)$ is a pointed metric space and $f\colon X\rightarrow\mathbb{R}^n$ is a Lipschitz function, for some $n\in\mathbb{N}$ that will be clear from context. Note that, unlike in \cite{Ke03}, the functions $f$ in our space-functions are always Lipschitz, and they are allowed to map into $\mathbb{R}^n$ rather than $\mathbb{R}$. As an abuse of notation, we will call a space-function ``doubling'', ``complete'', etc. if the underlying space is doubling or complete, and we will call it $L$-Lipschitz if the function $f$ is $L$-Lipschitz.

The notion of Gromov-Hausdorff convergence can be extended to space-functions as, for example, in \cite{Ke03} and \cite{KM11}. We present a version of this here.

\begin{definition}
If $(X,x,f\colon X\rightarrow\mathbb{R}^n)$ and $(Y,y,g\colon Y\rightarrow\mathbb{R}^n)$ are space-functions, we define
\begin{align*}
\tilde{D}((X,d,x,f), (Y,d',y,g)) = \inf \bigg\{&\epsilon>0: \text{there exist } \phi:(X,d,x)\rightarrow (Y,d',y) \text{ and }\\
&\psi:(Y,d',y)\rightarrow (X,d,x)\\
&\text{that are }\epsilon\text{-isometries, and such that}\\
&\sup_{B(x,1/\epsilon)} |f-g\circ\phi|<\epsilon \text{ and } \sup_{B(y,1/\epsilon)}|g-f\circ\psi|<\epsilon\bigg\}
\end{align*}
\end{definition}

\begin{lemma}\label{Dproperties}
If we define $D = \min\{\tilde{D}, 1/2\}$, then $\tilde{D}$ is a ``pseudo-quasi-metric'', by which we mean the following:

\begin{enumerate}[\normalfont (i)]
\item\label{sym} $D$ is finite, non-negative, and symmetric.
\item\label{pseudometric} The $D$-distance between two doubling space-functions $(X,x,f)$ and $(Y,y,g)$ is zero if and only if there is a surjective isometry $i\colon \obar{X}\rightarrow \obar{Y}$ such that $g\circ i = f$, where $g$ and $f$ are identified with their extensions to the completions $\obar{X}$ and $\obar{Y}$.
\item\label{quasimetric} $D$ satisfies the quasi-triangle inequality
$$ D\left((X,x,f), (Z,z,h)\right) \leq 2\left(D\left((X,x,f),(Y,y,g)\right) + D\left((Y,y,g),(Z,z,h)\right)\right).$$
\end{enumerate}
\end{lemma}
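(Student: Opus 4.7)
For part (i), both finiteness and non-negativity are immediate from the definition: $\tilde D \geq 0$ as an infimum over positive $\epsilon$, and $D \leq 1/2$ by construction. Symmetry is built into the definition of $\tilde D$: the pair of conditions on $\phi$ and $\psi$ is invariant under simultaneously interchanging $(X,x,f)$ with $(Y,y,g)$ and swapping the roles of $\phi$ and $\psi$. So the only real work is in (ii) and (iii).

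For the quasi-triangle inequality in part (iii), my plan is to simply compose $\epsilon$-isometries. Given $\phi_1: X \to Y$ and $\psi_1 : Y \to X$ witnessing $\tilde{D}((X,x,f),(Y,y,g)) < \epsilon_1$, and $\phi_2 : Y \to Z$, $\psi_2 : Z \to Y$ witnessing $\tilde{D}((Y,y,g),(Z,z,h)) < \epsilon_2$, I would verify that $\phi_2 \circ \phi_1$ and $\psi_1 \circ \psi_2$ witness $\tilde D((X,x,f),(Z,z,h)) < \epsilon$ for some $\epsilon \leq 2(\epsilon_1 + \epsilon_2)$. Condition \eqref{isom1} follows from the triangle inequality $|d_Z(\phi_2\phi_1(a),\phi_2\phi_1(b)) - d_X(a,b)| \leq |d_Z - d_Y| + |d_Y - d_X| < \epsilon_1 + \epsilon_2$, valid on a slightly shrunk ball; condition \eqref{isom2} follows by chaining the near-covering statements with a small loss of radius coming from the base-point displacements $d(\phi_1(x),y) \leq 2\epsilon_1$ and $d(\phi_2(y),z)\leq 2\epsilon_2$. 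The factor of $2$ in the quasi-triangle inequality absorbs exactly this radius loss, which is why we need the truncation at $1/2$. The function-compatibility estimate $\sup_{B(x,1/\epsilon)} |f - h\circ \phi_2\phi_1| \leq \sup|f - g\circ \phi_1| + \sup|g - h\circ\phi_2|\circ\phi_1 < \epsilon_1 + \epsilon_2$ is routine (with the same minor radius loss).

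Part (ii) is the heart of the lemma. The ``if'' direction is straightforward: a surjective isometry $i: \obar X \to \obar Y$ with $g\circ i = f$ gives, upon restriction, maps satisfying the $\epsilon$-isometry conditions for every $\epsilon > 0$. For the harder ``only if'', suppose $\tilde D = 0$ and for each $j$ pick $\epsilon_j$-isometries $\phi_j : X \to Y$ and $\psi_j : Y \to X$ with $\epsilon_j \to 0$ and $\sup_{B(x,1/\epsilon_j)}|f - g\circ \phi_j| < \epsilon_j$ (and symmetrically for $\psi_j$). My plan is to use a standard Arzel\`a--Ascoli/diagonal argument: the doubling hypothesis on $X$ and $Y$ implies each bounded ball is totally bounded, hence its closure in the completion is compact. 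Fix a countable dense set $\{a_k\} \subseteq X$; by diagonal extraction I pass to a subsequence along which $\phi_j(a_k)$ converges in $\obar Y$ for every $k$. Condition \eqref{isom1} with $\epsilon_j \to 0$ shows the limit map $i : \{a_k\} \to \obar Y$ is distance-preserving, so it extends uniquely to an isometric embedding $i : \obar X \to \obar Y$. An identical argument applied to $\psi_j$ produces an isometric embedding $j: \obar Y \to \obar X$; condition \eqref{isom2} then forces $i$ and $j$ to have dense image, hence to be mutually inverse surjective isometries. Finally, passing the inequality $\sup_{B(x,1/\epsilon_j)} |f - g \circ \phi_j| < \epsilon_j$ to the limit (using continuity of $f$ and $g$ on the completions) yields $g \circ i = f$.

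The main obstacle I anticipate is the limit extraction in part (ii): $\epsilon$-isometries are not required to be continuous, so the convergence must be set up pointwise on a countable dense set and then extended via the isometry property to the completion. One also has to be careful that the limit map $i$ is \emph{surjective} onto $\obar Y$, not just isometric into it. This is where condition \eqref{isom2} of the $\epsilon$-isometry definition plays its role, together with the fact that $\psi_j$ provides a near-inverse to $\phi_j$ on an exhausting family of balls.
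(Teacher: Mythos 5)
Your argument follows the paper's approach in all three parts. Part (i) is immediate from the definitions, and for part (iii) the paper likewise composes $\epsilon$-isometries, recording the arithmetic fact
$$(2(\epsilon+\delta))^{-1} \leq \min\{\epsilon^{-1}-2\delta,\ \delta^{-1}-2\epsilon\}\quad\text{for } 0<\epsilon,\delta<1/2,$$
which is exactly the estimate that makes the composition of an $\epsilon$-isometry and a $\delta$-isometry a $2(\epsilon+\delta)$-isometry. For part (ii) the paper is terse --- it extends each $\phi_i$ to a $2/i$-isometry $\obar X \to \obar Y$, observes that doubling makes the completions proper, and asserts sub-convergence ``uniformly on compact sets'' to the desired isometry --- and your diagonal extraction over a countable dense set, followed by extension using the near-distance-preservation in condition (\ref{isom1}), is precisely how one would fill in that sketch. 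So this is the same route, with your write-up making explicit the convergence step the paper leaves implicit.

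One caveat in part (ii): the definition of $\tilde D$ does \emph{not} couple $\phi$ and $\psi$ --- they are independently chosen $\epsilon$-isometries, not required to be approximate inverses of one another. Consequently there is no reason for your limit maps $i$ and $j$ to be ``mutually inverse,'' and the remark that ``$\psi_j$ provides a near-inverse to $\phi_j$'' is a misreading of the definition of $\tilde D$. Fortunately this overclaim is not needed. Condition (\ref{isom2}) for $\phi_j$ alone already shows that $i(\obar X)$ is dense in $\obar Y$; since $i$ is an isometric embedding of the complete space $\obar X$, its image is complete, hence closed, hence equal to $\obar Y$. The auxiliary map $j$ can be dropped from the argument entirely.
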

\begin{proof}
It is clear from the definition that $\tilde{D}$, and therefore $D$, is finite, non-negative, and symmetric, and so (\ref{sym}) holds.

If $D((X,x,f),(Y,y,g))=0$ then there $1/i$-isometries $\phi_i\colon (X,x)\rightarrow (Y,y)$ such that
$$\sup_{B(y,i)}|g-f\circ\phi_i|<1/i.$$
We can extend $\phi_i$ to a map from $\overline{X}$ to $\overline{Y}$ as a $2/i$-isometry. Because $X$ and $Y$ are doubling, $\overline{X}$ and $\overline{Y}$ are proper: closed balls are compact. Therefore the maps $\phi_i$ sub-converge uniformly on compact sets to an isometry from $X$ to $Y$ satisfying the conditions of the lemma.

Conversely, if such an isometry exists, then it is clear that $D((X,x,f),(Y,y,g))=0$. Therefore (\ref{pseudometric}) holds.

The quasi-triangle inequality (\ref{quasimetric}) for $D$ follows from the fact that 
$$(2(\epsilon+\delta))^{-1} \leq \min\{\epsilon^{-1}-2\delta, \delta^{-1}-2\epsilon\}$$
if $0<\epsilon, \delta<1/2$. Indeed, this is inequality exactly what is needed to show that the composition of an $\epsilon$-isometry and a $\delta$-isometry is a $2(\epsilon+\delta)$-isometry.
\end{proof}

Although the function $D$ is not a metric, the previous lemma says that it is similar enough for our application. We will therefore say that a sequence of space-functions $(X_n, x_n,f_n)$ ``converges in $D$'' to a space-function $(X,x,f)$ if
$$D((X_n, x_n,f_n), (X,x,f))\rightarrow 0 \text{ as } n\rightarrow\infty.$$
The convergence in $D$ of a sequence of space-functions implies that the pointed metric spaces converge in the pointed Gromov-Hausdorff sense (as defined above). Conversely, by a standard Arz\'ela-Ascoli type argument, if $(X_n, x_n, f_n)$ are $C$-doubling, $L$-Lipschitz space functions with $\{f_n(x_n)\}$ bounded, and if $(X_n, d_n, x_n)\rightarrow (X,d,x)$ in the pointed Gromov-Hausdorff sense, then there is a subsequence  $\{(X_{n_k}, x_{n_k}, f_{n_k})\}$ and a Lipschitz function $f\colon X\rightarrow\mathbb{R}$ such that
$$ (X_{n_k}, x_{n_k}, f_{n_k}) \rightarrow (X, x, f) $$
in the metric $D$.

If $(X,x)$ is a pointed metric space, and $f\colon X\rightarrow\mathbb{R}^n$ is Lipschitz, then we denote by $\Tan(X,x,f)$ the collection of space functions $(Y,y,g)$ such that $Y$ is complete and
$$ \left(\frac{1}{\lambda_i}X, x, \frac{1}{\lambda_i}(f-f(x))\right) \rightarrow (Y,y,g) $$
for some sequence of positive real numbers $\lambda_i$ converging to zero. This is the collection of tangents of $X$ at $x$. If $X$ is doubling and $f$ is Lipschitz, then $\Tan(X,x,f)$ is always non-empty, by the above standard facts about Gromov-Hausdorff convergence. (Recall that a space-function is always at $D$-distance zero from a space-function whose underlying space is complete. Thus, there is no issue with assuming tangents to be complete, and this will be convenient later on.)

\begin{lemma}\label{GHproperties}
The following properties are preserved under Gromov-Hausdorff convergence of a sequence of space functions $\{(X_i,x_i,f_i)\}\rightarrow (X,x,f)$:
\begin{itemize}
\item If the functions $f_i$ are all $L$-Lipschitz, then so is $f$.
\item If the functions $f_i$ are all $L$-bi-Lipschitz, then so is $f$.
\item If the spaces $X_i$ are uniformly doubling metric spaces, then $X$ is doubling.
\item If the spaces $X_i$ are uniformly Ahlfors $n$-regular, then $X$ is Ahlfors $n$-regular.
\end{itemize}
\end{lemma}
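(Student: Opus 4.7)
The plan is to dispatch the first three bullets by direct manipulation of the $\epsilon$-isometry definition, and then to handle the Ahlfors $n$-regularity bullet---the only one requiring genuine work---by constructing an auxiliary measure on the limit space.

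For the Lipschitz bullet, fix $a, b \in X$ and extract $\epsilon_i$-isometries $\psi_i \colon (X, x) \to (X_i, x_i)$ with $\epsilon_i \to 0$ and $\sup_{B(x, 1/\epsilon_i)} |f - f_i \circ \psi_i| < \epsilon_i$. For $i$ large, $a, b \in B(x, 1/\epsilon_i)$, so condition (\ref{isom1}) gives $|d_i(\psi_i(a), \psi_i(b)) - d(a, b)| < \epsilon_i$, and the function-convergence condition gives $|f(a) - f_i(\psi_i(a))|, |f(b) - f_i(\psi_i(b))| < \epsilon_i$. Applying the $L$-Lipschitz estimate for $f_i$ at $\psi_i(a), \psi_i(b)$ and letting $\epsilon_i \to 0$ gives $|f(a) - f(b)| \leq L d(a, b)$. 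The bi-Lipschitz bullet is identical, invoking in addition the lower inequality $|f_i(\psi_i(a)) - f_i(\psi_i(b))| \geq L^{-1} d_i(\psi_i(a), \psi_i(b))$. For the doubling bullet, the same transfer argument moves packing and covering numbers of balls between the $X_i$ and $X$, so the uniform doubling bound on the $X_i$ passes to $X$. This argument in fact yields a sharper quantitative estimate under the Ahlfors hypothesis: every ball $B(y, r) \subset X$ can be covered by at most $C(r/\delta)^n$ balls of radius $\delta$, with $C$ depending only on the Ahlfors constant of the $X_i$. Using this cover directly in the definition of $\mathcal{H}^n_\delta$ yields the upper bound $\mathcal{H}^n(B(y, r)) \leq C' r^n$ in $X$.

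The lower Ahlfors bound is the main obstacle, since an optimal cover in the definition of $\mathcal{H}^n$ may consist of sets of highly non-uniform small diameter, and covering numbers alone do not control this from below. The plan is to transfer the Hausdorff measure itself. Fix $R > 0$, let $\nu_i$ denote the restriction of $\mathcal{H}^n$ to $\overline{B}(x_i, R) \subset X_i$, and push $\nu_i$ forward to $X$ via the $\epsilon_i$-isometry $\phi_i \colon (X_i, x_i) \to (X, x)$. Since $\overline{X}$ is proper, these pushforwards are supported in a fixed compact subset of $\overline{X}$ and have total mass at most $CR^n$, so by weak-$*$ compactness we extract a subsequential limit $\mu$ on $X$. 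A routine comparison, combining the $\epsilon_i$-isometry inequalities with the two-sided Ahlfors bounds for the $X_i$, shows that
\[
c^{-1} s^n \leq \mu(B(y, s)) \leq c s^n
\]
for every $y$ and every $s$ with $\overline{B}(y, s) \subset B(x, R/2)$. The upper inequality and the standard mass-distribution principle give $\mathcal{H}^n(E) \geq c^{-1} \mu(E)$ for every Borel $E \subset B(x, R/2)$, and combining with the lower inequality yields $\mathcal{H}^n(B(y, r)) \geq c^{-2} r^n$ in $X$. Letting $R \to \infty$ (with a diagonal extraction) completes the proof.
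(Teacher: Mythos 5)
Your proof is correct and follows the same route as the source the paper invokes: the paper's own proof dispatches the first three bullets as ``easy to check'' and cites Lemma 8.29 of \cite{DS97} for the Ahlfors-regularity bullet, whose argument is essentially what you describe, namely pushing $\mathcal{H}^n\vert_{X_i}$ forward along the $\epsilon_i$-isometries, extracting a weak-$*$ limit measure with two-sided $n$-regular bounds, and comparing with $\mathcal{H}^n$ on $X$ via the mass-distribution principle and a covering estimate. The only point worth making explicit is that the $\epsilon_i$-isometries should be chosen Borel measurable so that the pushforward measures are well-defined, which is always possible and costs nothing.
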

\begin{proof}
The first three of these properties are easy to check, and the fourth can be found in, e.g., Lemma 8.29 of \cite{DS97}.
\end{proof}

The following lemma about convergence of space-functions will be useful.

\begin{lemma}\label{halfconvergence}
Suppose that $(X,x,f)$ and $(Y,y,g)$ are Lipschitz space-functions (mapping into Euclidean space of the same dimension). Suppose that $\phi\colon X\rightarrow Y$ is an $\epsilon$-isometry such that $\phi(x)=y$ and
$$\sup_{B(x,1/\epsilon)} |f - g\circ \phi| < \epsilon<1.$$
Then
$$D\left((X,x,f), (Y,y,g)\right) < C\epsilon, $$
where $C$ depends only on the Lipschitz constants of $f$ and $g$.
\end{lemma}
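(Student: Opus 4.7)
The plan is to manufacture a quasi-inverse $\psi\colon (Y,y)\to(X,x)$ to the given $\phi$ and then check both function-approximation inequalities in the definition of $D$. The asymmetric hypothesis directly supplies the ``$\phi$ side'' (up to adjusting the radius by a constant factor), so the real work is constructing $\psi$ and verifying the $Y$-side bound. The fact that $\phi(x)=y$ is a convenience that lets me arrange $\psi(y)=x$ exactly, avoiding an annoying $2\epsilon$ error at the basepoint.

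To build $\psi$, I set $\psi(y)=x$. For each other $b\in Y$ in a suitably large ball around $y$, I invoke property (ii) of the $\epsilon$-isometry $\phi$ to pick $\psi(b)\in X$ with $d_Y(\phi(\psi(b)),b)<\epsilon$ and $d_X(\psi(b),x)$ comparable to $d_Y(b,y)$. (Outside that region $\psi$ may be defined arbitrarily, since $D$ only tests behavior on a large ball around the basepoint.) The two conditions for $\psi$ being a $C\epsilon$-isometry then follow routinely: condition (i) for $\psi$ comes from condition (i) for $\phi$ applied to $\phi(\psi(b_1))$ and $\phi(\psi(b_2))$, which lie within $\epsilon$ of $b_1,b_2$, together with the triangle inequality; condition (ii) for $\psi$ follows because any $a\in B_X(x,r)$ maps under $\phi$ into a controlled ball around $y$, so by the same construction $\psi(\phi(a))$ lies within $C\epsilon$ of $a$.

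For the $Y$-side function bound, fix $b\in B_Y(y,1/(C\epsilon))$ and set $a=\psi(b)$; for $C$ large enough the construction places $a\in B_X(x,1/\epsilon)$. Then
\[
|g(b)-f(a)|\le |g(b)-g(\phi(a))|+|g(\phi(a))-f(a)| < \LIP(g)\,\epsilon+\epsilon,
\]
where the first summand uses that $g$ is Lipschitz and $d_Y(b,\phi(a))<\epsilon$, and the second is the given hypothesis applied at $a\in B_X(x,1/\epsilon)$. The $X$-side bound $\sup_{B_X(x,1/(C\epsilon))}|f-g\circ\phi|<C\epsilon$ is immediate from the hypothesis. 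Absorbing everything into $C$, the definition of $D$ yields $D((X,x,f),(Y,y,g))<C\epsilon$ with $C$ depending only on $\LIP(f)$ and $\LIP(g)$.

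The main obstacle is not conceptual but bookkeeping: one must confirm that every ball on which a property of $\phi$ or $\psi$ is invoked actually sits inside the ball on which that property is known, and that all of the ``slack'' radii and error terms can be absorbed into a single constant $C$ depending only on the Lipschitz constants. This just amounts to enlarging $C$ a few times.
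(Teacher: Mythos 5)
Your proof is correct and follows essentially the same strategy as the paper: build a quasi-inverse $\psi$ to $\phi$, verify that it is a $C\epsilon$-isometry, and check the $Y$-side function bound by going through $\phi(\psi(b))$, which lies within $\epsilon$ of $b$. The only cosmetic difference is that the paper realizes $\psi$ concretely as $(\phi|_N)^{-1}\circ\pi$ for a maximal $\epsilon$-net $N$ and a closest-point projection $\pi$ onto $\phi(N)$, whereas you pick $\psi(b)$ directly from property (ii) of the $\epsilon$-isometry; both are implementations of the same idea.
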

\begin{proof}
For simplicity, we denote the metrics on $X$ and $Y$ both by $d$. Let $N\subset B(x,1/\epsilon)$ be a maximal separated $\epsilon$-net. In other words,
$$ d(y,z) \geq \epsilon $$
if $y,z\in N$ and $y\neq z$, and
$$ \dist(z,N)<\epsilon $$
for all $z\in B(x,1/\epsilon)$. We can also arrange that $x\in N$.

The fact that $\phi$ is an $\epsilon$-isometry implies that $\phi|_N$ is injective. Let $N'=\phi(N)\subset Y$. Because $\phi$ is an $\epsilon$-isometry, we know that every point of $B(y,1/2\epsilon)$ is within $3\epsilon$ of a point in $N'$.

Let $\pi\colon Y\rightarrow N'$ denote any choice of closest-point projection, i.e., $\pi(Y)\subset N'$ and $d(y,\pi(y)) = \dist(y,N')$. Then $\pi$ preserves distances up to an additive error of $6\epsilon$ for points in $B(y,1/2\epsilon)$. Let
$$ \psi = \left(\phi|_N\right)^{-1} \circ \pi : Y \rightarrow X. $$

We first claim that $\psi$ is a $7\epsilon$-isometry. Fix $y_1, y_2\in B(y,1/7\epsilon)$. We have
\begin{align*}
|d(\psi(y_1), \psi(y_2)) - d(y_1, y_2)| &\leq |d(\phi^{-1}(\pi(y_1)), \phi^{-1}(\pi(y_2))) - d(\pi(y_1), \pi(y_2))| + |d(\pi(y_1), \pi(y_2)) - d(y_1, y_2)|\\
&\leq \epsilon + 6\epsilon\\
&= 7\epsilon.
\end{align*}

In addition, for $r\leq 1/(7\epsilon)$,
$$\psi(B(y,r))\supseteq N \cap B(x, r-\epsilon) $$
and therefore
$$ N_{7\epsilon}(\psi(B(y,r))) \supseteq B(x,r-7\epsilon). $$

We now claim that 
$$\sup_{B(y,1/7\epsilon)} |g - f\circ \psi| < C\epsilon,$$
where $C$ depends only on the Lipschitz constant of $g$. For $z\in B(y, 1/7\epsilon)$, we have
\begin{align*}
|g(z) - f(\psi(z))| &= |g(z) - f((\phi|_N)^{-1}(\pi(z)))|\\
&\leq |g(z) - g(\pi(z))| + |g(\pi(z)) - f((\phi|_N)^{-1}(\pi(z)))|\\
&\leq 6\epsilon \LIP(g) + \epsilon.
\end{align*}
This completes the proof.
\end{proof}

At this point, we remark that all spaces in this paper are doubling and therefore separable, so they admit isometric embeddings into the Banach space $\ell^\infty(\mathbb{N})$. Thus, the collection of all doubling metric spaces up to isometry can be identified with a subset of the power set of $\ell^\infty(\mathbb{N})$, and so there are no set-theoretic difficulties with this object.

Though $D$ is not a metric, we nonetheless let the \textit{$D$-diameter} of a collection $\mathcal{C}$ of space functions be
$$ \sup \{ D((X,x,f), (Y,y,g)) : (X,x,f), (Y,y,g) \in \mathcal{C}\}.$$

\begin{lemma}\label{separable}
Let $\mathcal{M}$ be a collection of doubling, $L$-Lipschitz space-functions (mapping into the same $\mathbb{R}^n$). Then for any $\eta>0$, $\mathcal{M}$ is contained in a countable union of sets $B_l$, $l\in \mathbb{N}$, of $D$-diameter at most $\eta$.
\end{lemma}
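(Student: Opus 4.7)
The plan is to discretize each space-function by recording, to precision $\epsilon$, a finite amount of data from the ball $\overline{B}(x, 1/\epsilon)$, and to show via Lemma \ref{halfconvergence} that two space-functions producing the same record are close in $D$. Since there are only countably many possible records, this will give the desired countable partition. As a preliminary reduction, I partition $\mathcal{M} = \bigsqcup_{k \in \mathbb{N}} \mathcal{M}_k$, where $\mathcal{M}_k$ is the subcollection whose underlying metric space is doubling with constant at most $k$; it then suffices to cover each $\mathcal{M}_k$ by countably many sets of $D$-diameter at most $\eta$. Fix $\eta > 0$ and let $\epsilon > 0$ be a small parameter depending on $\eta$ and $L$, to be chosen at the end.

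For each $(X, x, f) \in \mathcal{M}_k$, choose a maximal $\epsilon$-separated set $\{p_1, \ldots, p_m\} \subseteq \overline{B}(x, 1/\epsilon)$ with $p_1 = x$; uniform doubling within $\mathcal{M}_k$ forces $m \leq m_0(k, \epsilon)$. Associate to $(X, x, f)$ the \emph{record} consisting of the integer $m$, the rounded distance matrix $\bigl(\lfloor d(p_i, p_j)/\epsilon \rfloor\bigr)_{i, j}$, and the rounded vectors $\bigl(\lfloor f(p_i)/\epsilon \rfloor\bigr)_i \in (\mathbb{Z}^n)^m$. Since the net-distances are bounded by $2/\epsilon$, the matrix takes only finitely many values for each $m$; the vectors range over a countable set; so there are only countably many records in total. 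Gather the space-functions in $\mathcal{M}_k$ sharing record $\alpha$ into the class $B_{k, \alpha}$.

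To verify that each $B_{k, \alpha}$ has $D$-diameter at most $\eta$, take $(X, x, f)$ and $(Y, y, g)$ in $B_{k, \alpha}$ with nets $\{p_i\} \subseteq X$ and $\{q_i\} \subseteq Y$ inducing the same record. Define $\phi \colon X \to Y$ by sending each $z \in B(x, 1/\epsilon)$ to $q_{i(z)}$, where $p_{i(z)}$ is a closest point of $\{p_i\}$ to $z$ (ties broken arbitrarily), and sending all remaining points of $X$ to $y$. Matching records force the correspondence $p_i \mapsto q_i$ to preserve pairwise distances within $2\epsilon$ and $f$-values (compared with $g$) within $\epsilon$; combined with the $\epsilon$-net property and the $L$-Lipschitz bounds on $f$ and $g$, routine estimates show that $\phi$ is a $C \epsilon$-isometry with $\phi(x) = y$ and $\sup_{B(x, 1/\epsilon)} |f - g \circ \phi| < C(L) \epsilon$. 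Lemma \ref{halfconvergence} then gives $D((X, x, f), (Y, y, g)) < C'(L) \epsilon$, which is $\leq \eta$ for $\epsilon$ chosen sufficiently small.

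The one subtlety I anticipate is verifying condition (ii) of the $\epsilon$-isometry definition at all intermediate scales $r \in [\epsilon, 1/\epsilon]$, rather than only at $r = 1/\epsilon$. This requires noting that, because the records match, the restriction of $\{p_i\}$ to each sub-ball $B(x, r)$ corresponds (up to a boundary error of order $\epsilon$) to the restriction of $\{q_i\}$ to $B(y, r)$, so that the approximate surjectivity of $\phi$ transfers across all scales. Everything else is bookkeeping.
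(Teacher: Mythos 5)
Your proof is correct, and it discretizes in the same spirit as the paper's, but by a concretely different route. The paper builds a single countable \emph{reference family} of finite space-functions with rational distance matrices and $\mathbb{Q}^n$-valued functions (obtained by taking a $\delta$-net in $B(y,1/\eta)$, embedding it in $(\mathbb{R}^m,\ell^\infty)$ via Kuratowski's theorem, and rounding to $\mathbb{Q}^m$), shows via Lemma \ref{halfconvergence} that every element of $\mathcal{M}$ lies within $D$-distance $\eta/4$ of some reference element, and finishes with the quasi-triangle inequality from Lemma \ref{Dproperties}. You instead attach a combinatorial \emph{record} directly to each member of $\mathcal{M}$ and compare two members sharing a record head-to-head, which dispenses with both the Kuratowski embedding and the quasi-triangle step; both arguments ultimately rest on Lemma \ref{halfconvergence}. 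Two small remarks. First, the subtlety you flag about condition (ii) at intermediate scales resolves immediately, with no need to track restrictions of nets to sub-balls: if $w\in B_Y(y,r-3\epsilon)$, choose $q_j$ with $d_Y(w,q_j)\le\epsilon$; then $d_Y(q_j,y)<r-2\epsilon$, the matching record gives $d_X(p_j,x)<d_Y(q_j,y)+\epsilon<r$, and $q_j=\phi(p_j)\in\phi(B_X(x,r))$, so $w\in N_{3\epsilon}(\phi(B_X(x,r)))$. Second, the preliminary partition of $\mathcal{M}$ by doubling constant is not actually needed: even without a uniform bound $m_0(k,\epsilon)$ on the net size, the set of possible records is a countable union over $m\in\mathbb{N}$ of products of a finite set (the rounded distance matrices of size $m$) with a countable set (the rounded $\mathbb{Z}^n$-valued function data), hence countable.
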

\begin{proof}
We consider the countable collection of all space-functions $(X,x,f)$ such that
\begin{itemize}
\item $X$ is finite and all distances between points of $X$ are rational, and
\item $f$ takes values in $\mathbb{Q}^n \subset \mathbb{R}^n$.
\end{itemize}

Given $(Y,y,g)\in \mathcal{M}$, we will show that it is within $D$-distance $\eta/4$ of such a space-function. The quasi-triangle inequality for $D$ (Lemma \ref{Dproperties}) then concludes the proof.

Let $\delta>0$ be a small constant to be chosen later, depending only on $\eta$ and $L$. Let $N$ denote a finite maximal $\delta$-net in $B(y, 1/\eta)\subseteq Y$, which we assume contains $y$. The set $N$ is finite because $Y$ is doubling. We consider $N$ as a metric space equipped with the restriction of the metric from $Y$.

By Kuratowski's theorem (\cite{He01}, p.\ 99), $N$ isometrically embeds into $\left(\mathbb{R}^m, ||\cdot||_{\ell^\infty}\right)$ for some $m\in\mathbb{N}$. Here $\left(\mathbb{R}^m, ||\cdot||_{\ell^\infty}\right)$ denotes $\mathbb{R}^m$ equipped with the metric induced by the norm
$$ ||x||_{\ell^\infty} = \max\{|x_i| : i=1,\dots,m\}$$
for $x=(x_1, \dots, x_m)\in\mathbb{R}^m$.

We form a new metric space $(N', d')$ in the following way: For each $a\in N\subset \mathbb{R}^m $, choose $a'\in \mathbb{Q}^m \subset \mathbb{R}^m$ within $\delta/8$ of $a$. Let $(N',d')$ denote the metric space on the set of all these new points $a'$ equipped with the restriction of the $\ell^\infty$ metric from $\mathbb{Q}^m$. Note that all distances in $(N',d')$ are rational.

Let $\psi\colon N'\rightarrow N \subset Y$ be the obvious bijection between points of $N'$ and points of $N$, and let $y' = \psi^{-1}(y)$. It is clear that $\psi$ is an $(\eta/2)$-isometry if $\delta$ is sufficiently small depending on $\eta$. Let $f\colon N'\rightarrow\mathbb{R}$ be defined so that $f(x)$ is a rational number within $\eta/2$ of $g(\psi(x))$. Thus, $g\circ\psi$ is within $\eta/2$ of $f$ by definition.

By Lemma \ref{halfconvergence}, 
$$ D((Y,y,g), (N',y', f))\leq C\delta \leq \eta/2, $$
where $C$ depends only on $L$, and $\delta$ is chosen in addition to be less than $\eta/(2C)$. This proves the lemma.
\end{proof}

\section{Moving the base points of tangents}\label{tangentsection}

This section is devoted to the proof of the following result, which is an extension of a result of Le Donne \cite{Le11}.

\begin{prop}\label{movedtangent}
Suppose $(X,d,\mu)$ is a doubling metric measure space and $f\colon X\rightarrow\mathbb{R}^n$ is Lipschitz. Then, for $\mu$-almost every $x\in X$, for all $(Y,y,g)\in \textnormal{\Tan}(X,x,f)$, and for all $y'\in Y$, we have $(Y, y', g-g(y'))\in \textnormal{\Tan}(X,x,f)$.
\end{prop}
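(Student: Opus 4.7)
The plan is to adapt Le Donne's original argument for tangents of doubling metric measure spaces from \cite{Le11}, carrying the Lipschitz function $f$ along as an additional piece of data tracked through the space-function pseudo-metric $D$.

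First I would handle the basepoint shift inside a fixed tangent. Suppose $(Y,y,g)\in\Tan(X,x,f)$ arises from scales $\lambda_j\to 0$, realized by $\epsilon_j$-isometries $\phi_j\colon Y\to (\lambda_j^{-1}X, x)$ that also approximate $g$ by $\lambda_j^{-1}(f-f(x))\circ\phi_j$. Fix $y'\in Y$ and set $z_j:=\phi_j(y')\in X$. Then $d(x,z_j)=\lambda_j d_Y(y,y')+o(\lambda_j)$ and $\lambda_j^{-1}(f(z_j)-f(x))\to g(y')$, so a direct computation based on the $\epsilon_j$-isometry conditions and Lemma \ref{halfconvergence} should give
$$
\bigl(\lambda_j^{-1}X,\, z_j,\, \lambda_j^{-1}(f-f(z_j))\bigr) \longrightarrow (Y,\, y',\, g-g(y'))
$$
in $D$. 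This part is essentially formal bookkeeping: the metric is recentered at $z_j$ and the function by the correct constant, and the $\phi_j$ translate in an obvious way.

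The main step is then to upgrade this shifted-basepoint convergence to an honest tangent at $x$. Here I would use Lemma \ref{separable} to fix, for each $k\in\mathbb{N}$, a countable collection $\mathcal{F}_k$ of doubling, $\LIP(f)$-Lipschitz space-functions that is $1/k$-dense in $D$. For each $(F,\eta)\in \bigcup_k \mathcal{F}_k\times \mathbb{Q}_+$ I would define the Borel set
$$
A(F,\eta)=\Bigl\{x\in X:\, \liminf_{r\to 0} D\bigl((r^{-1}X,\, x,\, r^{-1}(f-f(x))),\, F\bigr)<\eta\Bigr\}.
$$
Because $\mu$ is doubling, the Lebesgue differentiation theorem applies. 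The key Le Donne-type claim is then: for $\mu$-a.e.\ $x$, whenever $z_j\to x$ with $d(x,z_j)\leq C\lambda_j$ and $\lambda_j\to 0$, every $D$-limit of $(\lambda_j^{-1}X, z_j, \lambda_j^{-1}(f-f(z_j)))$ already lies in $\Tan(X,x,f)$. The proof should proceed by showing that, at a Lebesgue density point of $A(F,\eta)$, many nearby points $z$ at scale $\lambda$ inherit near-membership in $A(F,\eta')$ for a slightly larger $\eta'$ at a comparable scale; this transfers the approximation of $F$ between basepoints $x$ and $z_j$, and the countable family $\mathcal{F}=\bigcup_k\mathcal{F}_k$ then promotes this to arbitrary comparison $F$.

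Combining the two steps yields the proposition. The main obstacle is the second step, which is the genuine Le Donne-type density argument; the extension to space-functions is mostly mechanical, since Lemma \ref{separable} reduces comparisons to a countable family and Lemma \ref{GHproperties} guarantees the Lipschitz structure of $f$ survives in the $D$-limit. A subtlety to watch is the measurability of the map $(x,r)\mapsto (r^{-1}X,x,r^{-1}(f-f(x)))$ into the $D$-pseudo-metric space, needed to apply Lebesgue differentiation, but this should follow from standard Gromov-Hausdorff measurability arguments.
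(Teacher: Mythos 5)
Your first step — showing that the space-functions $\left(\lambda_j^{-1}X,\, z_j,\, \lambda_j^{-1}(f-f(z_j))\right)$ with $z_j=\phi_j(y')$ converge in $D$ to $(Y,y',g-g(y'))$ — is exactly Claim \ref{claim} in the paper's proof, and your description of it as bookkeeping via the $\epsilon_j$-isometry conditions and Lemma \ref{halfconvergence} is accurate. The gap is in the second step, and it is not merely a matter of filling in details.

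The claim you formulate — that for $\mu$-a.e.\ $x$, \emph{any} sequence $z_j\to x$ with $d(x,z_j)\leq C\lambda_j$ has all its $D$-limits landing in $\Tan(X,x,f)$ — cannot be established by the density argument you sketch. A Lebesgue density point of $A(F,\eta)$ tells you that points of $A(F,\eta)$ have nearly full measure in small balls around $x$, but it gives you no control over the \emph{specific} points $z_j=\phi_j(y')$ handed to you by the isometries: those points are determined by $y'$ and the sequence of scales, not by you, and nothing forces them into $A(F,\eta)$. The paper resolves precisely this issue not via your $\liminf$-sets but by running a proof by contradiction on the ``bad'' set $A$ directly: at a point of outer density $a$ of $A$, Lemma \ref{densitytangent} gives $\Tan(A,a,f)=\Tan(X,a,f)$, so the tangent $(Y,y,g)$ can be realized by $\epsilon_n$-isometries $\phi_n$ that \emph{take values in $A$}, forcing $a_n=\phi_n(y')\in A$. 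Only then can one invoke the defining ``bad'' property at each $a_n$ and close the argument with the quasi-triangle inequality and the small $D$-diameter of the $B_l$. Your proposal never invokes Lemma \ref{densitytangent} and has no substitute mechanism for placing the $z_j$ in a controlled set, so the ``transfer of near-membership'' you describe is unjustified for the points that actually matter.

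Two smaller points: the measurability issue you flag is real if you insist on Borel sets $A(F,\eta)$ and Lebesgue differentiation, but the paper sidesteps it entirely by working with outer measure and points of outer density, so the exceptional set need not be measurable at all. And your general claim about arbitrary $z_j$ with $d(x,z_j)\leq C\lambda_j$ is strictly stronger than what the paper proves or needs; even if it is true, proving it would require essentially the same density-of-bad-set/Lemma \ref{densitytangent} machinery, so you gain nothing by aiming for the stronger statement.
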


As we have not assumed that the measure $\mu$ is complete, the exceptional set in Proposition \ref{movedtangent} need not be measurable. We define the outer measure $\mu^*$ by
$$ \mu^*(A) = \inf\{\mu(B) : B\text{ Borel, } B\supseteq A\}. $$
Proposition \ref{movedtangent} says that the exceptional set on which the conclusion fails has outer measure zero. Such a set is contained in a Borel set of measure zero.

We say that point $a$ is a \textit{point of outer density} of a set $A$ if $a\in A$ and
$$ \lim_{r\rightarrow 0} \frac{\mu^*(A\cap B(x,r))}{\mu(B(x,r))} = 1.$$

Every subset of $X$ with positive outer measure has a point of outer density. Indeed, for any such set $A\subseteq X$ there exists a Borel set $B\supset A$ with $\mu(B) = \mu^*(A)>0$.  We have that
$$ \mu^*(A \cap E) = \mu(B\cap E) $$
for any Borel set $E\subseteq X$, from which it follows that any point of density of $B$ is a point of outer density of $A$.

\begin{lemma}\label{densitytangent}
Let $(X,d,\mu)$ be a doubling metric measure space, $f\colon X\rightarrow\mathbb{R}^n$ be Lipschitz, and let $A\subset X$ be a subset with a point of outer density at $a\in A$. Then $\textnormal{\Tan}(A,a,f) = \textnormal{\Tan}(X,a,f)$.
\end{lemma}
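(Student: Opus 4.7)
The plan is to show that for every $\epsilon>0$, the $D$-distance between $(\tfrac{1}{\lambda}A,a,\tfrac{1}{\lambda}(f-f(a)))$ and $(\tfrac{1}{\lambda}X,a,\tfrac{1}{\lambda}(f-f(a)))$ is less than $C\epsilon$ for all $\lambda>0$ sufficiently small (depending on $\epsilon$ and $a$), where $C$ depends only on $\LIP(f)$. Once this is in hand, the quasi-triangle inequality of Lemma \ref{Dproperties} immediately yields $\Tan(A,a,f)=\Tan(X,a,f)$, since any tangent realized along a sequence $\lambda_i\to 0$ on one side of the equality is realized by the same sequence on the other side.

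The main technical step is a scale-invariant density statement distilled from the doubling property: \emph{for every $\eta>0$, there is $r_0=r_0(\eta,a)>0$ such that for every $0<r\leq r_0$, the set $A\cap B(a,r)$ is $\eta r$-dense in $B(a,(1-\eta)r)\subseteq X$}. I would prove this by contradiction: if some $y\in B(a,(1-\eta)r)$ satisfied $B(y,\eta r)\cap A=\emptyset$, then $B(y,\eta r)\subseteq B(a,r)\subseteq B(y,2r)$, and iteration of the doubling inequality gives $\mu(B(y,\eta r))\geq c(\eta)\mu(B(a,r))$ for some $c(\eta)>0$ depending only on $\eta$ and the doubling constant. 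Since $A\cap B(a,r)\subseteq B(a,r)\setminus B(y,\eta r)$, monotonicity of outer measure would force
$$\frac{\mu^*(A\cap B(a,r))}{\mu(B(a,r))}\leq 1-c(\eta),$$
contradicting outer density of $A$ at $a$ for $r$ sufficiently small. This is the one point where a little care is needed (because $\mu$ is only assumed Borel regular, not complete), but the inequality above uses $\mu^*$ only through monotonicity against the measurable set $B(a,r)\setminus B(y,\eta r)$, so it goes through cleanly.

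With the density statement in hand, I would verify that for $\lambda\leq\epsilon\,r_0(\epsilon,a)$, the inclusion $\iota:A\hookrightarrow X$, viewed as a map between the rescaled pointed spaces $(\tfrac{1}{\lambda}A,a)$ and $(\tfrac{1}{\lambda}X,a)$, is an $\epsilon$-isometry. Condition (i) of the $\epsilon$-isometry definition holds with zero error because $\iota$ is an isometric embedding, and condition (ii) at any rescaled radius $r\in[\epsilon,1/\epsilon]$ reduces to the density statement above applied at original radius $r\lambda\leq\lambda/\epsilon\leq r_0(\epsilon,a)$. The function-compatibility hypothesis of Lemma \ref{halfconvergence} is automatic, since $\iota(a)=a$ and the rescaled function on $X$ restricts to the rescaled function on $A$, giving $\sup|{\tfrac{1}{\lambda}(f-f(a))-\tfrac{1}{\lambda}(f-f(a))\circ\iota}|=0$. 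Lemma \ref{halfconvergence} then supplies the desired bound $D<C\epsilon$, and the first paragraph concludes the proof. The conceptual heart is the density claim; the rest is routine bookkeeping with the definition of $D$.
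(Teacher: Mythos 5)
Your proof is correct and takes the same route the paper gestures at (the paper omits the proof and just cites Le Donne's Proposition 3.1, which proceeds by exactly this density/inclusion-as-almost-isometry argument plus the quasi-triangle inequality). One small calibration remark: in step (ii) of the $\epsilon$-isometry verification, rescaled radius can be as large as $1/\epsilon$, so at original radius $R=\lambda r$ you need $A\cap B(a,R)$ to be $\epsilon^{2}R$-dense, not $\epsilon R$-dense; you should invoke the density claim with $\eta=\epsilon^{2}$ and take $\lambda\leq\epsilon\,r_{0}(\epsilon^{2},a)$, which is harmless since the claim holds for every $\eta>0$.
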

\begin{proof}
The proof of this is an easy modification of the proof of Proposition 3.1 in \cite{Le11}, which we omit. 
\end{proof}

\begin{proof}[Proof of Proposition \ref{movedtangent}]
We closely follow the argument in \cite{Le11}. Our goal is to show that the set
$$ \left\{x\in X: \text{ there exists } (Y,y,g)\in \Tan(X,x,f) \text{ and } y'\in Y \text{ such that } (Y,y',g-g(y'))\notin \Tan(X,x,f)\right\} $$
has outer measure zero.

Consider the collection $\mathcal{M}$ consisting of $(X,x,f)$ and all its rescalings and tangents. Note that $\mathcal{M}$ is a collection of uniformly doubling, uniformly Lipschitz space-functions. Using Lemma \ref{separable}, we see that for each $k\in \mathbb{N}$, there exist countably many collections $B_l$ of space-functions such that, for all $l$,
$$\text{diam}_D(B_l) < 1/4k$$
and $\mathcal{M} \subseteq \cup B_l$. 

It therefore suffices to show that, for all $k,l,m \in \mathbb{N}$, the set
\begin{align*}
\bigg\{x\in X: &\text{ there exists } (Y,y,g)\in \Tan(X,x,f) \text{ and } y'\in Y \text{ such that } \\
& (Y, y',g-g(y')) \in B_l \text{ and } D\left(\left(Y,y',g-g(y')\right), \left(\frac{1}{t}X, x, \frac{1}{t}(f-f(x))\right)\right)>\frac{1}{k} \text{ for all } t\in\left(0,1/m \right) \bigg\}
\end{align*}
has outer measure zero.

Suppose that, for some $k,l,m\in\mathbb{N}$, the set above has positive outer measure, and call it $A\subseteq X$. Let $a$ be a point of outer density of $A$. Then there exists $(Y,y,g)\in\Tan(X,a,f)$ and $y'\in Y$ such that
$$ (Y,y', g-g(y'))\in B_l $$
and
$$ D\left( \left(Y,y',g-g(y')\right), \left(\frac{1}{t}X, a, \frac{1}{t}(f-f(x))\right)\right) >\frac{1}{k},$$
for all $t\in\left(0,1/m \right).$

Because $(Y,y,g)\in\Tan(X,a,f)=\Tan(A,a,f)$, there are sequences $\lambda_n\rightarrow 0$ and $\epsilon_n\rightarrow 0$, as well as $\epsilon_n$-isometries $\phi_n\colon  (Y,y)\rightarrow (\frac{1}{\lambda_n}X,a)$ taking values in $A$ and satisfying
$$ \sup_{B(y, \epsilon_n^{-1})} \left|g - \frac{1}{\lambda_n}(f\circ \phi_n-f(x)) \right| \leq \epsilon_n.$$

Let $a_n = \phi_n(y')\in A\subseteq X$. Note that
\begin{equation}\label{andistance}
d_X(a_n, a) = O(\lambda_n) \rightarrow 0
\end{equation}
as $n\rightarrow\infty$.

Consider the space-functions
$$ \left(\frac{1}{\lambda_n}X, a_n, \frac{1}{\lambda_n}(f-f(a_n))\right). $$
We now make the following claim:

\begin{claim}\label{claim}
In the distance $D$, we have the convergence
$$ \left(\frac{1}{\lambda_n}X, a_n, \frac{1}{\lambda_n}(f-f(a_n))\right) \rightarrow (Y,y',g-g(y')) $$
\end{claim}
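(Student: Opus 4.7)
The plan is to re-use the very maps $\phi_n$ that witness $(\frac{1}{\lambda_n}X, a, \frac{1}{\lambda_n}(f - f(a))) \to (Y, y, g)$, but to regard them relative to the shifted base points $y' \in Y$ and $\phi_n(y') = a_n$. Since Lemma \ref{halfconvergence} allows us to certify $D$-convergence from a single approximate isometry that sends one base point to the other, and since $\phi_n(y') = a_n$ is already built into the definition of $a_n$, the whole argument reduces to checking that $\phi_n$ is still an $O(\epsilon_n)$-isometry at the new base points and that the function approximation still holds up to an $O(\epsilon_n)$ error.

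First I would verify the two $\epsilon$-isometry conditions for $\phi_n \colon (Y, y') \to (\frac{1}{\lambda_n}X, a_n)$. Fix any $R > 0$ and work with $n$ large enough that $B_Y(y', R) \subseteq B_Y(y, 1/\epsilon_n)$, which is possible because $d_Y(y, y')$ is a fixed finite quantity. The distance preservation condition (i) transfers directly from the original $\phi_n$ at base point $y$ with the same error $\epsilon_n$. The density condition (ii) is slightly more delicate: beginning with $z \in B_{\lambda_n^{-1}X}(a_n, r - \delta)$, I would first use the original condition (i) applied to $y, y'$ to estimate $d_{\lambda_n^{-1}X}(a, a_n) \leq d_Y(y, y') + 3\epsilon_n$, then conclude $z \in B_{\lambda_n^{-1}X}(a, r + d_Y(y, y') + O(\epsilon_n))$, and apply the original (ii) at base point $a$ to find $u \in Y$ with $d_{\lambda_n^{-1}X}(\phi_n(u), z) < \epsilon_n$. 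To certify $u \in B_Y(y', r)$, I would apply condition (i) once more: $d_Y(u, y') \leq d_{\lambda_n^{-1}X}(\phi_n(u), a_n) + \epsilon_n \leq r - \delta + 2\epsilon_n \leq r$ whenever $\delta \geq 2\epsilon_n$. Taking $\delta = C\epsilon_n$ for a suitable absolute constant yields an $O(\epsilon_n)$-isometry.

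Next I would check the function approximation. For $u \in B_Y(y', R)$, using the key identity $\phi_n(y') = a_n$, one obtains the telescoping
\begin{align*}
\bigl(g(u) - g(y')\bigr) - \tfrac{1}{\lambda_n}\bigl(f(\phi_n(u)) - f(a_n)\bigr)
&= \Bigl[g(u) - \tfrac{1}{\lambda_n}(f(\phi_n(u)) - f(a))\Bigr] \\
&\quad - \Bigl[g(y') - \tfrac{1}{\lambda_n}(f(\phi_n(y')) - f(a))\Bigr].
\end{align*}
Each bracket is bounded in absolute value by $\epsilon_n$ by the original function-convergence hypothesis, so the total is $O(\epsilon_n)$ uniformly on $B_Y(y', R)$.

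Combining these two facts with Lemma \ref{halfconvergence} (whose constant depends only on the Lipschitz constants of $g - g(y')$ and $\frac{1}{\lambda_n}(f - f(a_n))$, both uniformly bounded by $\LIP(f)$) gives
$$D\bigl((Y, y', g - g(y')),\ (\tfrac{1}{\lambda_n}X, a_n, \tfrac{1}{\lambda_n}(f - f(a_n)))\bigr) = O(\epsilon_n) \to 0,$$
which is the claim. I expect the main (mild) obstacle to be the density condition in the first step, since it requires applying the original condition (i) twice — once to locate $a_n$ relative to $a$ and once to translate a distance bound in $\frac{1}{\lambda_n}X$ back into a distance bound in $Y$ so as to certify membership in the correct ball $B_Y(y', r)$ — so some bookkeeping is needed to keep all the additive error terms of size $O(\epsilon_n)$.
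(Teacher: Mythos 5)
Your proposal is correct and follows essentially the same route as the paper: re-use the maps $\phi_n$ with the shifted base points $y'$ and $a_n = \phi_n(y')$, verify that they remain $O(\epsilon_n)$-isometries, establish the function bound by telescoping through the middle term $\tfrac{1}{\lambda_n}(f(\phi_n(\cdot)) - f(a))$, and invoke Lemma~\ref{halfconvergence}. The only difference is that you spell out the triangle-inequality bookkeeping for condition (\ref{isom2}) in more detail than the paper does.
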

\begin{proof}[Proof of Claim \ref{claim}]
Consider the same mappings $\phi_n$ as before, now considered as mappings
$$\phi_n: (Y,y') \rightarrow \left(\frac{1}{\lambda_n}X, a_n\right). $$
We will first show that if $n$ is sufficiently large, $\phi_n$ is a $2\epsilon_n$-isometry with these base points. By (\ref{andistance}), if $n$ is sufficiently large, then
$$ B_Y(y', (2\epsilon_n)^{-1}) \subset B_Y(y,\epsilon_n^{-1})$$
and so $\phi_n$ satisfies property (\ref{isom1}) of a $2\epsilon_n$-isometry.

In addition, if $r\leq (2\epsilon_n)^{-1}$ and $n$ is sufficiently large, then
$$ B_{\lambda_n^{-1} X}(a_n,r-2\epsilon_n) \subset B_{\lambda_n^{-1} X}(x,1/\epsilon_n-\epsilon_n).$$
Therefore, if $z\in B_{\lambda_n^{-1} X}(a_n,r-2\epsilon_n)$ then $z$ is within $\lambda_n^{-1}X$-distance $\epsilon_n$ of a point $\phi_n(w)$, where $w\in  B_Y(y',1/\epsilon_n)$. A simple application of the triangle inequality and the properties of $\phi_n$ shows that $w$ must be in $B_Y(y',r)$. Therefore,
$$ B_{\lambda_n^{-1} X}(a_n,r-2\epsilon_n) \subset N_{2\epsilon_n}\left( B_Y(\phi_n(y'),r)\right)$$
which verifies property (\ref{isom2}) of a $2\epsilon_n$-isometry.

Thus, for $n$ large, each mapping $\phi_n$ is a $2\epsilon_n$-isometry from $(Y,y')$ to $(\frac{1}{\lambda_n}X, a_n)$. In addition, we have, for $z\in  B(y', (2\epsilon_n)^{-1})$,
\begin{align*}
\left|(g(z)-g(y')) - \frac{1}{\lambda_n}(f(\phi_n(z)) - f(a_n))\right| &\leq \left|g(z) - \frac{1}{\lambda_n}(f(\phi_n(z)) - f(a))\right|\\
&\qquad +\left|g(y') - \frac{1}{\lambda_n}(f(a_n) - f(a))\right|\\
&= \left|g(z) - \frac{1}{\lambda_n}(f(\phi_n(z)) - f(a))\right|\\
&\qquad +\left|g(y') - \frac{1}{\lambda_n}(f(\phi_n(y')) - f(a))\right|\\
&\leq \epsilon_n + \epsilon_n\\
&= 2\epsilon_n
\end{align*}
Thus, the mappings $\phi_n\colon  (Y,y') \rightarrow (\frac{1}{\lambda_n}X, a_n)$ each satisfy the conditions of Lemma \ref{halfconvergence}, and so we see that, for some $C>0$ independent of $n$,
$$ D\left( \left(Y,y', g-g(y')\right),  \left(\frac{1}{\lambda_n}X, a_n, \frac{1}{\lambda_n}(f-f(a_n))\right)\right) \leq C\epsilon_n \rightarrow 0.$$
\end{proof}

Therefore, for $n$ sufficiently large, we have
\begin{equation}\label{close}
 D\left(\left(\frac{1}{\lambda_n}X, a_n, \frac{1}{\lambda_n}(f-f(a_n))\right), \left(Y,y',g-g(y')\right)\right) < \frac{1}{4k}.
\end{equation}

Now, since $a_n\in A$, there are space-functions $(Y_n, y_n, g_n)\in \Tan(X,a_n,f)$ and points $y'_n\in Y_n$ such that
$$ (Y_n, y'_n, g_n - g_n(y'_n)) \in B_l, $$
and
$$ D\left(\left(\frac{1}{t}X, a_n, \frac{1}{t}(f-f(a_n))\right),\left(Y_n, y'_n, g_n - g_n(y'_n)\right)\right) > 1/k,$$
for all $t\in (0,1/m)$.

We then have, for $n$ large,
\begin{align*}
\frac{1}{k} &< D\left( \left(Y_n, y'_n, g_n - g_n(y'_n)\right) , \left(\frac{1}{\lambda_n}X, a_n, \frac{1}{\lambda_n}(f-f(a_n))\right) \right)\\
&\leq 2\bigg(D\left( \left(Y_n, y'_n, g_n - g_n(y'_n)\right) , \left(Y, y', g-g(y')\right)\right)\\
&\qquad + D\left( \left(Y, y', g - g(y')\right) , \left(\frac{1}{\lambda_n}X, a_n, \frac{1}{\lambda_n}(f-f(a_n))\right) \right)\bigg)\\
&< 2\left(\frac{1}{4k}+\frac{1}{4k}\right),
\end{align*}
where the first $\frac{1}{4k}$ term arises because both spaces are in $B_l$ and the second comes from (\ref{close}). This is a contradiction.

\end{proof}

\section{Relationship to Lipschitz differentiability}\label{lipdifftangent}

We now investigate Lipschitz differentiability spaces. From now on, all metric measure spaces are assumed to be doubling and complete (but not necessarily Ahlfors regular until the proof of Theorem \ref{Rntangent}).

The following notation, borrowed from \cite{Ba13}, is useful: Let $\Gamma(X)$ be the collection of all bi-Lipschitz mappings
$$ \gamma:D_\gamma \rightarrow X$$
where $D_\gamma \subset \mathbb{R}$ is a compact set containing $0$.

We use the following result of Bate \cite{Ba13}, which is a consequence of his investigations of Alberti representations in Lipschitz differentiability spaces.

\begin{thm}[\cite{Ba13}, Corollary 6.7]\label{Bate}
Let $(U,\phi)$ be an $n$-dimensional chart in a complete Lipschitz differentiability space $(X,d,\mu)$. Then for almost every $x\in U$, there exist $\gamma^x_1, \dots, \gamma^x_n\in \Gamma(X)$ such that each $\gamma^x_i(0)=x$, $0$ is a density point of $(\gamma^x_i)^{-1}(U)$, and the derivatives $(\phi\circ\gamma^x_i)'(0)$ exist and are linearly independent.
\end{thm}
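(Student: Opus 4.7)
The plan is to invoke the theory of Alberti representations developed in \cite{Ba13}. Recall that an Alberti representation of $\mu|_U$ consists of a probability measure $\mathbb{P}$ on $\Gamma(X)$ together with a family of fiber measures $\{\mu_\gamma\}$, each absolutely continuous with respect to $\mathcal{H}^1$ on $\gamma(D_\gamma)$, such that $\mu|_U = \int \mu_\gamma\, d\mathbb{P}(\gamma)$. Given the chart map $\phi \colon U \to \mathbb{R}^n$, I would say the representation has $\phi$-directions in a cone $C \subset \mathbb{R}^n$ if, for $\mathbb{P}$-a.e.\ $\gamma$ and $\mu_\gamma$-a.e.\ $t$, the derivative $(\phi \circ \gamma)'(t)$ exists, is nonzero, and points into $C$.

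The core step, which I expect to be the principal obstacle, is to establish that, after a countable Borel decomposition of $U$ if necessary, there exist $n$ Alberti representations $\mathcal{A}_1, \dots, \mathcal{A}_n$ of $\mu|_U$ with $\phi$-directions in cones $C_1, \dots, C_n$ that are \emph{independent}, meaning any choice of $v_i \in C_i$ forms a basis of $\mathbb{R}^n$. I would argue this by contradiction: if, on some positive-measure subset $V \subseteq U$, the $\phi$-directions of every Alberti representation of $\mu|_V$ were confined to a proper linear subspace $W \subsetneq \mathbb{R}^n$, one would construct a Lipschitz function on $V$ --- morally, something like $\dist(\phi(\cdot), W)$ modified to be non-$\phi$-linear --- that fails to be $\phi$-differentiable on a set of positive measure, contradicting the fact that $(U,\phi)$ is a chart. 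This structural statement is the deep technical content of \cite{Ba13}, resting on a delicate decomposability analysis in the spirit of the Euclidean work of Alberti-Cs\"ornyei-Preiss.

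Granting this, the theorem follows from a Fubini-type argument. Fix $i$. For $\mathbb{P}_i$-a.e.\ curve $\gamma$ and $\mu_\gamma$-a.e.\ parameter $t$, the derivative $(\phi \circ \gamma)'(t)$ exists and lies in $C_i$; disintegrating $\mu|_U$ over the representation shows that for $\mu$-a.e.\ $x \in U$ one can find $\gamma_i^x \in \Gamma(X)$ and $t_0$ with $\gamma_i^x(t_0) = x$ at which both this derivative condition holds and $t_0$ is a Lebesgue density point of $(\gamma_i^x)^{-1}(U)$, the latter coming from the absolute continuity of $\mu_\gamma$ with respect to $\mathcal{H}^1$. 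After an affine reparametrization that makes $t_0 = 0$ (and which preserves the class $\Gamma(X)$ since it is bi-Lipschitz), intersecting the full-measure sets obtained over $i = 1, \dots, n$ yields a full-measure subset of $U$ on which all $n$ curves can be chosen simultaneously. Since the cones $C_i$ are independent, the derivatives $(\phi \circ \gamma_i^x)'(0) \in C_i$ are then automatically linearly independent, completing the proof.
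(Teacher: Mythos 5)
This statement is cited in the paper directly from \cite{Ba13} (as Corollary 6.7) without proof, so the only sensible comparison is with Bate's argument, which your proposal reconstructs. The architecture you give is right: Alberti representations are indeed the machinery, and your Fubini step --- disintegrating $\mu|_U$ along each representation, using $\mu_\gamma\ll\mathcal{H}^1$ to conclude that $\mu_\gamma$-almost every parameter is a density point of $\gamma^{-1}(U)$, reparametrizing so this parameter is $0$, and intersecting over $i=1,\dots,n$ --- is essentially how Bate passes from his Theorem 6.6 (the existence of $n$ Alberti representations of $\mu|_U$ with $\phi$-directions in independent cones) to Corollary 6.7. The observation that independence of the cones $C_i$ forces linear independence of the resulting derivatives is also correct.

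The genuine gap, which you yourself flag, is the structural input itself: that an $n$-dimensional chart forces, after a countable Borel decomposition, $n$ Alberti representations of $\mu|_U$ in independent cones. Your one-line contradiction sketch is too coarse to establish it. It is not at all clear that a function like $\dist(\phi(\cdot),W)$ fails to be $\phi$-differentiable when every Alberti representation of $\mu|_V$ has $\phi$-directions in $W$: $\phi$-differentiability is tested against all approaching sequences, not merely the curves one controls, and the distance-to-$W$ function is smooth off $W$, so the purported non-differentiability is far from automatic. Bate's actual mechanism is considerably more delicate: if $\mu|_V$ has no Alberti representation with $\phi$-directions in a cone $C$, he decomposes a positive-measure subset of $V$ into countably many ``$C$-null'' pieces and then builds, via a porosity and speed analysis rather than by composing $\phi$ with a distance function, a Lipschitz function whose $\phi$-derivative cannot exist and be unique on a positive-measure set. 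That is the content of Sections 5 and 6 of \cite{Ba13}, and it is precisely what your proposal correctly names but does not supply.
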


The property given in the conclusion of Theorem \ref{Bate} admits an improvement if one passes to tangents. An \textit{$L$-bi-Lipschitz line} in a metric space $X$ is an $L$-bi-Lipschitz map $l\colon \mathbb{R}\rightarrow X$.

\begin{prop}\label{lines}
Let $(X,d,\mu)$ be a complete doubling metric measure space and let $f \colon X\rightarrow\mathbb{R}^n$ be a Lipschitz function. Suppose that there is a set $A$ of positive measure such that for every $x\in A$, there exists $\gamma^x\in \Gamma(X)$ with $\gamma^x(0)=x$, $0$ a density point of $D_{\gamma^x}$, and such that $v_x=(f\circ\gamma^x)'(0)$ exists and is non-zero.

Then for almost every $x\in A$, every $(Y,y,g)\in \Tan(X,x,f)$ has the following property:

There is $L\geq 1$ such that for every $z\in Y$, there exists an $L$-bi-Lipschitz line $l$ with $l(0) = z$ that satisfies
$$ g(l(t)) = g(z) + tv_x$$
for all $t\in \mathbb{R}$.

The constant $L$ depends on the point $x$ but not on the sequence of scales defining the tangent.
\end{prop}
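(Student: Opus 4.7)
The plan is to argue in two steps. First, I would show that every tangent $(Y, y, g) \in \Tan(X, x, f)$ admits an $L$-bi-Lipschitz line through the base point $y$ along which $g$ is the affine map $s \mapsto g(y) + s v_x$, by blowing up the curve $\gamma^x$ at the same sequence of scales that defines the tangent. I would then use Proposition \ref{movedtangent} to change the base point to an arbitrary $z \in Y$ and repeat the first step to produce a line through $z$.

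To implement the first step, restrict to the full-measure subset $A' \subseteq A$ on which the conclusion of Proposition \ref{movedtangent} holds together with the hypotheses on $\gamma^x$. Fix $x \in A'$ and write $\gamma = \gamma^x$, $D = D_{\gamma^x}$, $v = v_x$, and $L = \max\{\LIP(\gamma), \LIP(\gamma^{-1})\}$. Let $(Y, y, g) \in \Tan(X, x, f)$ be defined by scales $\lambda_i \to 0$ and $\epsilon_i$-isometries $\phi_i \colon (\lambda_i^{-1} X, x) \to (Y, y)$. Consider the rescaled curves $\gamma_i(t) = \gamma(\lambda_i t)$, defined on $\lambda_i^{-1} D$; each is $L$-bi-Lipschitz as a map into $\lambda_i^{-1} X$ with $\gamma_i(0) = x$. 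A short Lebesgue density argument shows that, since $0$ is a density point of $D$, the maximal gap in $\lambda_i^{-1} D \cap [-R, R]$ tends to $0$ for every $R > 0$. Because $Y$ is complete and doubling (hence proper), a diagonal Arzel\`a--Ascoli extraction applied to $\phi_i \circ \gamma_i$---sampled along a countable dense subset of $\mathbb{R}$ and approximated by points of $\lambda_i^{-1} D$---produces a subsequential uniform limit that extends to an $L$-bi-Lipschitz line $l \colon \mathbb{R} \to Y$ with $l(0) = y$. The derivative condition $f(\gamma(t)) = f(x) + tv + o(t)$ for $t \in D$ near $0$ then passes to the limit under the space-function convergence, giving $g(l(s)) = sv$ for every $s \in \mathbb{R}$.

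For the second step, given any $z \in Y$, Proposition \ref{movedtangent} supplies $(Y, z, g - g(z)) \in \Tan(X, x, f)$; applying the first step to this tangent produces an $L$-bi-Lipschitz line through $z$ on which $g - g(z)$ equals $s \mapsto sv$, yielding the desired identity $g(l(s)) = g(z) + s v_x$. Since $L$ depends only on $\gamma^x$, it is uniform across the choices of $z$ and of the defining scales. The main obstacle is Step 1: carrying out the Arzel\`a--Ascoli extraction within the Gromov--Hausdorff framework, where one must keep careful track of the $\epsilon_i$-isometries, verify that the discrete domains $\lambda_i^{-1} D$ become dense enough in compact intervals to produce a continuous limit, and confirm that the resulting line inherits the linearity of $f \circ \gamma$ from the space-function convergence of $(\lambda_i^{-1} X, x, \lambda_i^{-1}(f - f(x)))$ to $(Y, y, g)$.
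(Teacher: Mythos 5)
Your proposal follows the same route as the paper's proof: blow up the curve $\gamma^x$ along the same sequence of scales that defines the tangent to produce an $L$-bi-Lipschitz line through the base point $y$ on which $g$ is the affine map $t\mapsto tv_x$, then invoke Proposition~\ref{movedtangent} to move the base point to an arbitrary $z\in Y$. The only difference is cosmetic: the paper outsources the Arzel\`a--Ascoli and density-point extraction you sketch by hand to Lemmas 8.19, 9.12, and 9.13 of \cite{DS97}.
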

\begin{proof}
Because the conclusion is supposed to hold for almost every $x\in A$, we may assume that $x$ is among the full-measure set of points for which the conclusion of Proposition \ref{movedtangent} holds.

Consider any $(Y,y,g)\in \Tan(X,x,f)$. There is a sequence $\{\lambda_n\}$ tending to zero such that
$$ \left(\lambda_n^{-1} X, x, \lambda_n^{-1}(f-f(x))\right) \rightarrow (Y, y, g). $$
Fix $\epsilon_n$-isometries $\phi_n \colon (Y,y)\rightarrow (\lambda_n^{-1} X, x)$ and $\psi_n \colon (\lambda_n^{-1}X, x)\rightarrow  (Y,y)$ such that
$$ \sup_{B(x,1/\epsilon_n)} |\lambda_n^{-1}(f-f(x))-g\circ\phi|<\epsilon_n \text{ and } \sup_{B(y,1/\epsilon)}|g-\lambda_n^{-1}(f\circ\psi-f(x))|<\epsilon_n, $$
where $\epsilon_n\rightarrow 0$ as $n\rightarrow\infty$. 

We first claim the following: there is an $L$-bi-Lipschitz line $l\colon \mathbb{R}\rightarrow Y$ such that $l(0)=y$ and $g(l(t)) = tv_x$ for all $t\in\mathbb{R}$. In other words, we first claim that the conclusion of the proposition holds when $z$ is actually the base point $y$ of the tangent.

To find the line $l$, we blow up the curve $\gamma^x$ at $t=0\in\mathbb{R}^n$, along the same sequence of scales $\{\lambda_n\}$. Although we have not defined the Gromov-Hausdorff convergence of functions mapping into metric spaces other than $\mathbb{R}^n$, for this one can use the theory developed in \cite{DS97}, Chapter 8.

Passing to a subsequence and using again standard facts about blowups at points of density (see \cite{DS97}, Lemmas 9.12 and 9.13), this gives a bi-Lipschitz line $l$ in $Y$ through $y$. 

By \cite{DS97}, Lemma 8.19, this line $l$ has the following property: There are maps $\sigma_n\colon \mathbb{R}\rightarrow D_\gamma$ such that
$$ \lim_{n\rightarrow \infty} \lambda_n^{-1} \left|\sigma_n(t) - \lambda_n t\right| = 0 $$
and
$$ l(t) = \lim_{n\rightarrow \infty} \psi_n (\gamma(\sigma_n(t))) $$
uniformly in $t$ on bounded subsets of $\mathbb{R}$.

Recall also that $g$ is given by the limit
$$ g(z) = \lim_{n\rightarrow\infty} \frac{1}{\lambda_n}\left(f(\phi_n(z)) - f(x)\right) $$
uniformly on bounded subsets of $Y$.

Therefore, using the uniformity of the convergence and the Lipschitz property of $f$ and $\gamma$, we have that
\begin{align*}
g(l(t)) &= \lim_{n\rightarrow\infty} \frac{1}{\lambda_n}\left(f(\phi_n(l(t))) - f(x)\right)\\
&= \lim_{n\rightarrow\infty} \frac{1}{\lambda_n} \left( f(\phi_n(\psi_n(\gamma^x(\sigma_n(t))))) - f(x)\right)\\
&= \lim_{n\rightarrow\infty} \frac{1}{\lambda_n}\left( f(\gamma^x(\sigma_n(t))) - f(x)\right)\\
&= \lim_{n\rightarrow\infty} \frac{1}{\lambda_n}\left( f(\gamma^x(\lambda_n t)) - f(x)\right)\\
&= t(f\circ \gamma^x)'(0)\\
&= tv_x
\end{align*}
for all $t\in \mathbb{R}$. Thus, we see that $(g\circ l)(t)=tv_x$.

This gives the conclusion of the proposition at the base point $y\in Y$. Now consider any point $z\in Y$. By Proposition \ref{movedtangent}, $(Y,z,g-g(z))\in \Tan(X,x,f)$. Therefore, by the preceding argument, we get the conclusion of the proposition at the arbitrary point $z\in Y$.
\end{proof}

\begin{prop}\label{nlines}
Let $(U,\phi\colon U\rightarrow\mathbb{R}^n)$ be an $n$-dimensional chart in a complete doubling Lipschitz differentiability space. When they exist, let $\gamma^x_1, \dots, \gamma^x_n$ be the $n$ ``broken curves'' through $x$ provided by Theorem \ref{Bate}, and let $v^x_i=(\phi\circ\gamma^x_i)'(0)$, which are $n$ linearly independent vectors in $\mathbb{R}^n$.

Then for almost every $x\in U$, every $(Y,y,G)\in \Tan(X,x,\phi)$ has the following property: 
There exists $L\geq 1$ such that, for every $z\in Y$, there are $n$ $L$-bi-Lipschitz lines $l_1, \dots, l_n$ with $l_i(0) = z$ that satisfy
$$ G(l_i(t)) = G(z) + tv^x_i $$
for all $t\in\mathbb{R}$.

The constant $L$ depends on the point $x$ but not on the sequence of scales defining the tangent.
\end{prop}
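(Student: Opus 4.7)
The plan is to reduce directly to Proposition \ref{lines}, which was formulated so as to handle a single curve per point. Since Theorem \ref{Bate} supplies $n$ curves $\gamma^x_1,\dots,\gamma^x_n$ through $\mu$-almost every $x\in U$, the natural approach is to apply Proposition \ref{lines} once per index $i$ and then intersect the resulting full-measure sets.

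Concretely, first I would fix a full-measure subset $U_0\subseteq U$ on which Theorem \ref{Bate} supplies the $n$ curves with linearly independent derivatives $v^x_i=(\phi\circ\gamma^x_i)'(0)\in\mathbb{R}^n$. In particular, each $v^x_i$ is nonzero, which is the only condition on the derivative that Proposition \ref{lines} requires. For each $i=1,\dots,n$, I would then invoke Proposition \ref{lines} with $f=\phi$, with the set $A=U_0$, and with the single family of curves $x\mapsto\gamma^x_i$. This yields a full-measure subset $U_i\subseteq U_0$ and a constant $L_i(x)\ge1$ with the property that, for every $x\in U_i$, every tangent $(Y,y,G)\in\Tan(X,x,\phi)$, and every $z\in Y$, there is an $L_i(x)$-bi-Lipschitz line $l_i\colon\mathbb{R}\rightarrow Y$ with $l_i(0)=z$ satisfying $G(l_i(t))=G(z)+tv^x_i$ for all $t\in\mathbb{R}$.

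Setting $U'=U_1\cap\cdots\cap U_n$, which still has full measure in $U$, and $L(x)=\max_i L_i(x)$, the proposition follows: at every $x\in U'$, every tangent $(Y,y,G)\in\Tan(X,x,\phi)$, and every base point $z\in Y$, the $n$ lines $l_1,\dots,l_n$ obtained above are simultaneously $L(x)$-bi-Lipschitz and satisfy the stated linear relations with $G$. The assertion that $L(x)$ depends only on $x$ and not on the sequence of scales defining the tangent is inherited from the corresponding clause of Proposition \ref{lines}.

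In terms of obstacles, there is essentially nothing new to verify. Proposition \ref{lines} was engineered with this application in mind, and its hypotheses place no joint condition on different choices of the curve family, so the $n$ independent curves of Theorem \ref{Bate} may be fed in one at a time. The only mild point is that the vectors $v^x_i$ in the final statement are the \emph{same} vectors produced by Theorem \ref{Bate}, rather than some blown-up analogue; this is precisely what Proposition \ref{lines} asserts, so no extra work is needed.
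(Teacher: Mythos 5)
Your proposal is correct and is exactly the argument the paper has in mind; the paper condenses it to ``this follows immediately from the previous two results'' (Theorem \ref{Bate} and Proposition \ref{lines}), and your write-up simply unpacks that one-liner: apply Proposition \ref{lines} once per index $i\in\{1,\dots,n\}$ with the single curve family $\gamma^x_i$ (noting that linear independence of the $v^x_i$ gives the required nonvanishing, and that density of $0$ in $(\gamma^x_i)^{-1}(U)$ implies density in $D_{\gamma^x_i}$), intersect the $n$ full-measure sets, and take $L(x)=\max_i L_i(x)$.
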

\begin{proof}
This follows immediately from the previous two results.
\end{proof}

\section{Obtaining Lipschitz quotient maps}\label{LQsection}

Recall that a \textit{Lipschitz quotient map} $f$ between metric spaces $X$ and $Y$ is a Lipschitz map for which there exists $c>0$ such that
$$f(B(x,r)) \supseteq B(f(x),cr)$$
for any ball $B(x,r)$ in $X$. The constant $c$ is called the \textit{co-Lipschitz} constant of the map.

A simple reformulation of the Lipschitz quotient property is the following. A Lipschitz map $f\colon X\rightarrow Y$ is a Lipschitz quotient map with co-Lipschitz constant $c$ if and only if
$$ \dist_X(x,f^{-1}(y)) \leq c^{-1} d_Y(f(x),y)$$
for every $x\in X$ and $y\in Y$.

\begin{cor}\label{lipquot}
Let $(U,\phi\colon U\rightarrow\mathbb{R}^n)$ be an $n$-dimensional chart in a complete doubling Lipschitz differentiability space $X$. Then for almost every $x\in U$, every $(Y,y,F)\in \Tan(X,x,\phi)$ has the property that $F$ is a Lipschitz quotient map onto $\mathbb{R}^n$.

The Lipschitz and co-Lipschitz constants associated to the Lipschitz quotient map $F$ depend on the point $x$, but not on the sequence of scales defining the tangent.
\end{cor}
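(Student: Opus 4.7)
The plan is to show that the ``multiple bi-Lipschitz lines through every point'' property provided by Proposition \ref{nlines} directly forces the blow-up $F$ to be a Lipschitz quotient map. First, $F$ is automatically Lipschitz (with the same constant as $\phi$) by the first item of Lemma \ref{GHproperties}, so the only content is the co-Lipschitz inequality. Using the reformulation stated just before the corollary, it suffices to prove that there is a constant $c=c(x)>0$ such that for every $z\in Y$ and every $w\in\mathbb{R}^n$,
$$ \dist_Y\!\left(z, F^{-1}(F(z)+w)\right) \leq c^{-1}|w|. $$

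To achieve this, I would fix $x$ in the full-measure set from Proposition \ref{nlines} and write $w=\sum_{i=1}^n a_i v_i^x$ in the basis $\{v_1^x,\dots,v_n^x\}\subset\mathbb{R}^n$. Since these vectors are linearly independent, there is a constant $C_x$ such that $\sum_i |a_i|\leq C_x |w|$. I then build a point $z_n\in F^{-1}(F(z)+w)$ by an $n$-step chaining argument. Set $z_0=z$. Given $z_{i-1}$, apply Proposition \ref{nlines} at $z_{i-1}$ to obtain an $L$-bi-Lipschitz line $l_i\colon\mathbb{R}\rightarrow Y$ with $l_i(0)=z_{i-1}$ and $F(l_i(t))=F(z_{i-1})+t v_i^x$. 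Define $z_i=l_i(a_i)$; then $F(z_i)=F(z_{i-1})+a_i v_i^x$ and $d_Y(z_{i-1},z_i)\leq L|a_i|$ by the bi-Lipschitz property of $l_i$. After $n$ steps,
$$ F(z_n)=F(z)+\sum_{i=1}^n a_i v_i^x = F(z)+w, $$
and by the triangle inequality
$$ d_Y(z,z_n)\leq L\sum_{i=1}^n |a_i|\leq L C_x |w|. $$
Taking $c=(LC_x)^{-1}$ gives the desired co-Lipschitz inequality, and this constant depends on $x$ through $L$ and the vectors $v_i^x$ but not on the particular tangent, exactly as claimed.

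The only potential obstacle is the verification that the chaining actually lands in the right fiber. This is immediate from the fact that along each line $l_i$ the function $F$ is literally affine in the direction $v_i^x$, so no error terms accumulate, and the argument is truly finite (only $n$ steps). Surjectivity onto $\mathbb{R}^n$ also follows, since starting at $z$ we can reach the preimage of $F(z)+w$ for any $w\in\mathbb{R}^n$. Thus the corollary reduces to a clean linear algebra observation combined with Proposition \ref{nlines}.
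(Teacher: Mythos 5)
Your proof is correct and follows essentially the same $n$-step chaining argument that the paper uses, built on the bi-Lipschitz lines from Proposition \ref{nlines}. The only cosmetic difference is that the paper first applies a linear change of variable $A$ sending $v_i^x\mapsto e_i$ and chains in the standard basis before transferring back to $F=A^{-1}\circ G$, whereas you chain directly in the basis $\{v_i^x\}$ and absorb the same linear-algebra factor into the constant $C_x$; the substance and the dependence of constants on $x$ (via $L$ and the $v_i^x$) are identical.
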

\begin{proof}
We may assume that $x$ lies in the full measure set provided by Proposition \ref{nlines}. As in Proposition \ref{nlines}, we have $n$ ``broken curves'' $\gamma^x_i$ through $x$, from Theorem \ref{Bate}. Let $v_i=(\phi\circ\gamma^x_i)'(0)$, which are $n$ linearly independent vectors in $\mathbb{R}^n$. To simplify the proof, we first fix a linear map $A\colon \mathbb{R}^n\rightarrow\mathbb{R}^n$ that sends each $v_i$ to $e_i$, the $i$th standard basis vector of $\mathbb{R}^n$. Note that $A$ is invertible, because $\{v_i\}$ is a linearly independent set.

Now let $\psi = A\circ \phi$. It is clear that $(U,\psi)$ is still an $n$-dimensional chart, so we can apply Proposition \ref{nlines} to obtain $L\geq 1$ and $(Y,y,G)\in \Tan(X,x,\psi)$ with the property that for every $z\in Y$, there are $n$ $L$-bi-Lipschitz lines $l^z_1, \dots, l^z_n$ with $l^z_i(0) = z$ that satisfy
$$ G(l^z_i(t)) = G(z) + te_i $$
for all $t\in \mathbb{R}$.

We now show that $G$ is a Lipschitz quotient map. As a tangent of a Lipschitz map, it is automatically Lipschitz. To establish the co-Lipschitz bound, it suffices (by the remark above) to show that there is a constant $C>0$ such that, whenever $z\in Y$ and $p\in\mathbb{R}^n$,
\begin{equation}\label{LQequivalent}
\text{dist}(z, G^{-1}(p)) \leq C |G(z) - p|.
\end{equation}

Fix $z\in Y$ and $p\in\mathbb{R}^n$. Let $q=G(z)\in\mathbb{R}^n$. Write $p = (p_1, p_2, \dots, p_n)\in\mathbb{R}^n$ and $q=(q_1, q_2, \dots, q_n) \in \mathbb{R}^n$.

Let $z_1 = l^z_1(p_1 - q_1)$. Then
$$ G(z_1) = q+ (p_1-q_1)e_1 = (p_1, q_2, \dots, q_n).$$

Let $z_2 = l^{z_1}_2(p_2 - q_2)$. Then
$$ G(z_2) = G(z_1) + (p_2-q_2)e_2 = (p_1, p_2, q_3, \dots, q_n).$$

Repeating this $n$ times, we obtain $z_n$ such that $G(z_n) = p$. In addition, 
\begin{align*}
d_Y(z_n, z) &\leq d(z,z_1) + d(z_1, z_2) + \dots + d(z_{n-1}, z_n)\\
&\leq L|p_1 - q_1| + L|p_2 - q_2| + \dots + L|p_n - q_n|\\
&\leq Ln^{1/2}|G(z) - p|.
\end{align*}

Because $z_n\in G^{-1}(p)$, this proves (\ref{LQequivalent}) and so concludes the proof that $G$ is a Lipschitz quotient map with co-Lipschitz constant $c=(Ln^{1/2})^{-1}$. Now consider the space-function $(Y,y,F)\in\Tan(X,x,\phi)$ associated to the same sequence of scales as $(Y,y,G)\in\Tan(X,x,\psi)$. As $A\circ \phi = \psi$ and taking tangents is a linear operation on functions, we see that $A\circ F = G$. Therefore $F = A^{-1} \circ G$, and since $A$ is bi-Lipschitz, $F$ is also a Lipschitz quotient map.

The bi-Lipschitz constant of $A$ depends only on the vectors $\{v_i\}$ and not on the sequence of scales defining the tangent. Therefore, the Lipschitz and co-Lipschitz constants of $F$ also do not depend on the sequence of scales defining the tangent.
\end{proof}

The following corollary summarizes two simple immediate consequences.
\begin{cor}\label{lipquotfacts}
Let $(U,\phi)$ be an $n$-dimensional chart in a complete doubling Lipschitz differentiability space $X$. Then the following two facts hold:
\begin{enumerate}[\normalfont (i)]
\item At almost every point of $U$, any tangent of $\phi$ maps onto $\mathbb{R}^n$. \label{surjtangent}
\item\label{lowermass}  For almost every point $x\in U$, there is a constant $c_0>0$ such that any tangent $(Y,y)\in \Tan(X,x)$ satisfies the lower mass bound
$$ \mathcal{H}^n(B(z,r)) \geq c_0 r^n $$
for all $z\in Y$ and $r>0$.
\end{enumerate}
\end{cor}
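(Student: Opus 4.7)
The plan is to read both conclusions off directly from Corollary \ref{lipquot}. Fix $x$ in the full-measure subset of $U$ on which every $(Y, y, F) \in \Tan(X, x, \phi)$ is an $L$-Lipschitz, $c$-co-Lipschitz quotient map onto $\mathbb{R}^n$, where the constants $L$ and $c$ depend only on $x$ (not on the scales defining the tangent).

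Part (i) is immediate from the co-Lipschitz property: for any $p \in \mathbb{R}^n$, choosing $r > c^{-1}|F(y) - p|$ places $p$ in $B(F(y), cr) \subseteq F(B(y, r))$, so $F$ maps onto $\mathbb{R}^n$.

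For part (ii), start with a metric tangent $(Y, y) \in \Tan(X, x)$ taken along scales $\lambda_i \to 0$. The rescaled functions $\lambda_i^{-1}(\phi - \phi(x))$ are uniformly Lipschitz and vanish at the basepoint, so the Arzel\`a--Ascoli principle discussed at the end of Section \ref{GHspacefunctions} lets us pass to a subsequence for which these space-functions converge to some $(Y, y, F) \in \Tan(X, x, \phi)$; Corollary \ref{lipquot} then makes $F$ an $L$-Lipschitz, $c$-co-Lipschitz quotient onto $\mathbb{R}^n$. For any $z \in Y$ and $r > 0$, the quotient property applied at $z$ gives $F(B(z, r)) \supseteq B(F(z), cr)$, and the standard bound $\mathcal{H}^n(F(A)) \leq L^n \mathcal{H}^n(A)$ for $L$-Lipschitz maps into $\mathbb{R}^n$ yields
\[
\mathcal{H}^n(B(z, r)) \geq L^{-n} \mathcal{H}^n\bigl(F(B(z, r))\bigr) \geq L^{-n} \mathcal{H}^n(B(F(z), cr)) = L^{-n} \omega_n c^n r^n,
\]
where $\omega_n = \mathcal{H}^n(B(0, 1))$ in $\mathbb{R}^n$. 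Setting $c_0 = L^{-n} \omega_n c^n$, which by the uniformity clause in Corollary \ref{lipquot} depends only on $x$, gives the claim.

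There is essentially no obstacle here; the heavy lifting was done in Corollary \ref{lipquot}. The only delicate point is the reduction from a purely metric tangent $(Y, y)$ to a companion space-function tangent $(Y, y, F)$, which is handled by Arzel\`a--Ascoli, and the observation that $c_0$ is independent of the choice of tangent precisely because the Lipschitz and co-Lipschitz constants from Corollary \ref{lipquot} are.
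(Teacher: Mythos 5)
Your argument is correct and is exactly the ``immediate consequence'' the paper invokes without spelling out: part (i) is the co-Lipschitz property applied at the basepoint, and part (ii) is the measure-pushforward bound $\mathcal{H}^n(F(A)) \leq L^n \mathcal{H}^n(A)$ combined with $F(B(z,r)) \supseteq B(F(z),cr)$. You also correctly handle the one slightly nontrivial step---passing from a metric tangent $(Y,y)$ to a companion space-function tangent $(Y,y,F)$ via the Arzel\`a--Ascoli remark at the end of Section~\ref{GHspacefunctions}---and note that the uniformity of $c_0$ in the tangent follows from the corresponding uniformity clause in Corollary~\ref{lipquot}.
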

Corollary \ref{lipquotfacts} is an analog of Theorem 13.4 of \cite{Ch99} from the setting of PI spaces.

\section{Uniformly rectifiable tangents}\label{recttangent}
The proof of Theorem \ref{Rntangent} will be an immediate application of Corollary \ref{lipquot} and the main theorem of \cite{Da88}, which we discuss briefly.

\subsection{David's bi-Lipschitz pieces result}

Let $(X,d)$ be a complete Ahlfors $n$-regular metric space. Such a space admits a standard ``dyadic cube decomposition''. We describe this briefly, although the exact details are not really important for us. 

The formulation in \cite{Se00}, Section 2.3, is the easiest to apply here. It says that there exists $j_0\in\mathbb{Z}\cup\{\infty\}$ (with $2^{j_0} \leq \diam X < 2^{j_0+1}$ if $X$ is bounded) such that for each $j<j_0$, there exists a partition $\Delta_j$ of $X$ into measurable subsets $Q\in \Delta_j$ such that

\begin{itemize}\label{cubes}
\item $Q\cap Q' = \emptyset$ if $Q,Q'\in \Delta_j$ and $Q\neq Q'$.
\item If $j\leq k < j_0$ and $Q\in \Delta_j, Q'\in \Delta_k$, then either $Q\subseteq Q'$ or $Q\cap Q' = \emptyset$. 
\item $C_0^{-1} 2^j \leq \diam Q \leq C_0 2^j$ and $C_0^{-1} 2^{nj} \leq \haus{Q} \leq C_0 2^{nj}$.
\item For every $j< j_0$ and $Q\in \Delta_j$, there is a point $x\in Q$ such that $B(x, c_0 2^j) \subseteq Q \subseteq B(x, C_0 2^j)$.
\end{itemize}

The constants $c_0$ and $C_0$ in the cube decomposition depend only on $n$ and the Ahlfors-regularity constant of the space.

Note that these dyadic cubes are not necessarily closed or open, but merely measurable. They are also disjoint, and do not merely have disjoint interiors.

The following condition was introduced by David \cite{Da88}. Although we state the full, rather technical, condition, we will only use a very simple consequence of it, Corollary \ref{lqbilip} below.

\begin{definition}\label{davidscondition}
Let $(X,d)$ be a complete Ahlfors $n$-regular metric space with a system of dyadic cubes as above. Let $I_0$ be a cube in $X$, and $z\colon I_0\rightarrow \mathbb{R}^n$ be a Lipschitz map. We will say that $z$ satisfies \textit{David's condition} on $I_0$ if the following holds:

For every $\lambda, \gamma > 0$, there exist $\Lambda, \eta > 0$ such that, for every $x\in I_0$ and $j<j_0$, if $T$ is the union of all $j$-cubes in $X$ intersecting $B(x,\Lambda 2^j)$, and if $T\subseteq I_0$ and $|z(T)|\geq \gamma |T|$, then either:
\begin{enumerate}[(i)]
\item \label{containsball} $z(T) \supseteq B(z(x), \lambda 2^{j})$, or 
\item \label{expandscube} there is a $j$-cube $R\subset T$ such that
$$ |z(R)|/|R| \geq (1+2\eta)|z(T)|/|T|$$
\end{enumerate}
\end{definition}

Theorem 1 of \cite{Da88} (see also the more general Theorem 10.1 of \cite{Se00}) says the following:
\begin{thm}[\cite{Da88}, Theorem 1]\label{davidtheorem}
Let $(X,d)$ be a complete Ahlfors $n$-regular metric space with a system of dyadic cubes as above. Let $I_0$ be a cube in $X$, and $z\colon I_0\rightarrow\mathbb{R}^n$ be a Lipschitz map. Suppose that $|z(I_0)|\geq \delta |I_0|$ for some $\delta>0$, and that $z$ satisfies David's condition on $I_0$.

Then there is a set $E\subset I_0$ with $|E|\geq \theta$ such that $|z(x)-z(y)|\geq M^{-1} d(x,y)$ for all $x,y\in E$.

The constants $\theta\geq 0$ and $M\geq 0$ depend only on the constants associated to the space, the Lipschitz constant of $z$, and the numbers $\delta$, $\Lambda$ and $\eta$ (for $\lambda=1$ and $\gamma=\delta/2$).
\end{thm}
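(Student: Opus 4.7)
The plan is a Calder\'on--Zygmund-type stopping-time argument on cubes, driven by the dichotomy built into Definition \ref{davidscondition}. The key quantitative input is that the density $D(T) := |z(T)|/|T|$ is bounded above by a constant $K$ depending only on $\LIP(z)$ and the Ahlfors regularity constant, since
$$ |z(T)| \leq C \LIP(z)^n (\diam T)^n \leq C' \LIP(z)^n |T|. $$
Therefore, along any nested chain of cubes for which alternative (\ref{expandscube}) of David's condition fires, $D$ grows by a factor $(1+2\eta)$ each time and hence can jump at most $N := \lceil \log(K/\delta)/\log(1+2\eta) \rceil$ times before exceeding $K$.

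First I would fix $\lambda = 1$ and $\gamma = \delta/2$ in Definition \ref{davidscondition} to obtain the corresponding $\Lambda$ and $\eta$. Starting from $I_0$ (where $D(I_0) \geq \delta > \gamma$), I would grow a stopping tree of cubes: for each $x \in I_0$ and each scale $j$, inspect the neighborhood $T = T_j(x)$ appearing in Definition \ref{davidscondition}; if alternative (\ref{containsball}) holds, declare $(x,j)$ good; otherwise (\ref{expandscube}) supplies a sub-cube $R \subset T$ on which $D$ has jumped by $(1+2\eta)$. The depth bound from the previous paragraph, together with straightforward bookkeeping of how much measure is lost per bad generation (controlled by $\Lambda$ and the Ahlfors regularity of $X$), produces a good set $E \subset I_0$ on which alternative (\ref{containsball}) eventually holds at every sufficiently small scale, with $|E| \geq \theta$ for a $\theta$ depending only on the stated parameters.

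On $E$, by construction, for all sufficiently small $j$ the image $z(T_j(x))$ covers $B(z(x), 2^j)$. I would then convert this uniform ball-covering property into a pointwise lower bound on $|z(x) - z(y)|$: given $x, y \in E$ with $d(x,y) \sim 2^j$, if $|z(x) - z(y)| \ll 2^j$, then the ball-covering property applied at $y$ at a slightly smaller scale forces preimages of points in $B(z(x), 2^j)$ to cluster too close to $y$, contradicting the Lipschitz bound on $z$ combined with the ball-covering at $x$. Discarding another small exceptional subset to handle the case where $x$ and $y$ have different stopping depths, one obtains the bi-Lipschitz bound $|z(x) - z(y)| \geq M^{-1} d(x,y)$ on the remaining subset, still of measure at least $\theta/2$.

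The principal obstacle will be this last step: promoting a family of scale-by-scale ball coverings into a genuine pointwise bi-Lipschitz lower bound. A naive dyadic telescoping would accumulate a multiplicative constant at every scale and hence fail, so one must exploit the boundedness of the depth $N$ of the stopping tree --- which is where the full strength of David's condition (as opposed to a plain Lipschitz quotient hypothesis) really enters --- in order to keep the final constants uniform in terms only of the parameters listed in the theorem.
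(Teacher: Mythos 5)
The paper does not supply a proof of this result; the bracketed citation in the statement marks it as imported verbatim from \cite{Da88}, Theorem 1 (see also \cite{Se00}, Theorem 10.1). The only argument the paper makes near this theorem is the short observation that a Lipschitz quotient map trivially satisfies David's condition, which feeds into Corollary~\ref{lqbilip}; that is independent of proving Theorem~\ref{davidtheorem} itself. So there is no in-paper proof to compare yours against, and your attempt must be judged on its own terms as a reconstruction of David's argument.

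On those terms, you have correctly identified the engine: the density $D(T)=|z(T)|/|T|$ is bounded above by a constant $K$ depending only on $\LIP(z)$ and the Ahlfors regularity constant, so branch~(\ref{expandscube}) of Definition~\ref{davidscondition} can fire at most $N\approx\log(K/\delta)/\log(1+2\eta)$ times along any chain of cubes, and this finiteness is what should make a stopping construction terminate. But there are two genuine gaps. First, describing the measure control as ``straightforward bookkeeping of how much measure is lost per bad generation'' undersells the real difficulty. Branch~(\ref{expandscube}) delivers a single $j$-cube $R\subset T$ of increased density; it does not partition $T$ into good and bad children, nor control the measure of $T\setminus R$ (on which density may have dropped below $\gamma$, exiting the hypothesis entirely). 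Converting these local density increments into a set $E$ with $|E|\geq\theta$ is a corona-type decomposition and is the true heart of David's proof; it does not reduce to a single Calder\'on--Zygmund stopping time.

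Second, and you flag this yourself, the passage from ``ball covering at every scale on $E$'' to ``lower Lipschitz bound on $E$'' does not go through as written. The containments $z(T_j(x))\supseteq B(z(x),2^j)$ and $z(T_{j'}(y))\supseteq B(z(y),2^{j'})$ produce no contradiction from $|z(x)-z(y)|\ll 2^j$ when $d(x,y)\sim 2^j$: $y$ could simply lie near the fiber $z^{-1}(z(x))$, a configuration entirely consistent with both containments and with $z$ being Lipschitz. Indeed a Lipschitz quotient map is precisely the co-Lipschitz property you would obtain from ball covering everywhere, and such a map need not be injective anywhere; the whole point of the theorem is to carve out a subset where injectivity (quantitatively) does hold. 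The missing ingredient is that along the good chains the density $|z(T)|/|T|$ is pinned in $[\gamma,K]$, so $z$ is roughly measure-preserving and not merely co-Lipschitz, and it is this bounded-distortion information at each scale that lets one discard a controlled exceptional set and obtain the lower bound $|z(x)-z(y)|\geq M^{-1}d(x,y)$ on what remains. Without building that into the argument, the proposal does not close.
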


We now note that if $(X,d)$ is Ahlfors $n$-regular and $z\colon X\rightarrow\mathbb{R}^n$ is a Lipschitz quotient map, then $z$ trivially satisfies David's condition on any cube. Indeed, suppose $z$ is a Lipschitz quotient map, so that $z(B(x,r))\supseteq B(z(x),cr)$ for all $x\in X$ and $r>0$. Fix a cube $I_0\subseteq X$ and constants $\lambda, \gamma>0$. Set $\Lambda = \lambda/c$ and $\eta$ arbitrary. Let $T$ be the union of all $j$-cubes in $X$ touching $B=B(x,\Lambda 2^j)$, where $x\in I_0$. If $T\subseteq I_0$, then $B\subseteq T \subseteq I_0$. We therefore see that
$$ z(T) \supseteq z(B) \supseteq B(z(x), c\Lambda 2^j) = B(z(x), \lambda 2^j). $$
Thus the first branch of David's condition is always satisfied.

The following is therefore an immediate consequence of David's Theorem \ref{davidtheorem}, which we record so that we can reference it later.

\begin{cor}\label{lqbilip}
Let $(Y,d)$ be a complete Ahlfors $n$-regular and let $f\colon Y\rightarrow\mathbb{R}^n$ be a Lipschitz quotient map. Then, for every ball $B$, $f$ is $\beta$-bi-Lipschitz on some subset of $B$ of measure at least $\alpha\haus{B}$. Here $\alpha,\beta>0$ depend only on the Ahlfors regularity constant of the space and the Lipschitz and co-Lipschitz constants of $f$.

In particular, $Y$ is uniformly rectifiable, with constants $\alpha$ and $\beta$.
\end{cor}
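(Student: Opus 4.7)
The plan is to apply David's theorem (Theorem \ref{davidtheorem}) on an appropriately chosen dyadic cube inside each ball $B$, using the observation already made in the paragraph preceding the corollary that every Lipschitz quotient map satisfies David's condition trivially.

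First, I would fix a ball $B = B(x_0,r)$ in $Y$ and invoke the dyadic cube decomposition. Choosing $j$ so that $2^j$ is comparable to $r$ (say $c_0 2^{j} \leq r/2 \leq C_0 2^{j}$), I can select a cube $I_0 \in \Delta_j$ with $x_0 \in I_0$, so that $I_0 \subseteq B(x_0, C_0 2^j) \subseteq B$, and $\mathcal{H}^n(I_0) \geq c_0^n 2^{nj} \gtrsim \mathcal{H}^n(B)$, where the implicit constant depends only on the Ahlfors regularity constant.

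Next, I would verify that $|f(I_0)| \geq \delta |I_0|$ for a uniform $\delta > 0$. Since $I_0$ contains a ball $B(y, c_0 2^j)$ and $f$ is a Lipschitz quotient map with co-Lipschitz constant $c$, the image $f(I_0) \supseteq f(B(y, c_0 2^j)) \supseteq B(f(y), c\, c_0 2^j)$. Hence $|f(I_0)| \gtrsim 2^{nj} \gtrsim |I_0|$ by Ahlfors regularity of $\mathbb{R}^n$ and of $Y$, giving the lower bound with $\delta$ depending only on the relevant constants. Combined with the observation (proved in the excerpt immediately above the corollary) that $f$ satisfies David's condition on $I_0$ for any $\lambda, \gamma > 0$ with $\Lambda = \lambda/c$ and any $\eta > 0$, the hypotheses of Theorem \ref{davidtheorem} are met with constants depending only on the Ahlfors regularity constant of $Y$ and on the Lipschitz and co-Lipschitz constants of $f$.

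Applying Theorem \ref{davidtheorem} with $\lambda = 1$ and $\gamma = \delta/2$ yields a subset $E \subseteq I_0 \subseteq B$ with $\mathcal{H}^n(E) \geq \theta \gtrsim \mathcal{H}^n(I_0) \gtrsim \mathcal{H}^n(B)$ on which $f$ is $M$-bi-Lipschitz, where $\theta$ and $M$ depend only on the permitted parameters. Rescaling the measure bound into the form $\mathcal{H}^n(E) \geq \alpha \mathcal{H}^n(B)$ with $\alpha > 0$ uniform finishes the first assertion. For the second assertion, this is simply the definition of uniform rectifiability with constants $\alpha$ and $\beta = M$: every ball $B$ of radius $r$ contains a set of proportional measure on which $f$ is bi-Lipschitz to a subset of $\mathbb{R}^n$.

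There is no real obstacle here; the two substantive ingredients, namely David's theorem and the triviality of David's condition for Lipschitz quotient maps, have already been set up. The only mild bookkeeping point is to check that the constants are indeed quantitative and depend only on the allowed data, which is immediate since the dyadic cube constants depend only on $n$ and the Ahlfors regularity constant, and all other estimates in the argument depend only on these together with $\LIP(f)$ and the co-Lipschitz constant of $f$.
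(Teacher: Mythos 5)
Your argument follows exactly the route the paper intends: the paper states Corollary \ref{lqbilip} as an ``immediate consequence'' of Theorem \ref{davidtheorem} together with the preceding observation that Lipschitz quotient maps trivially satisfy David's condition, and your proposal simply fills in the bookkeeping (selecting a cube $I_0 \subseteq B$ of comparable measure, verifying $|f(I_0)| \gtrsim |I_0|$ via the co-Lipschitz property, and invoking scale-invariance of the constants). The details are correct and quantitatively uniform as required.
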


\subsection{Proof of Theorem \ref{Rntangent} and Corollary \ref{banachtangent}}
We now apply Corollaries \ref{lipquot} and \ref{lqbilip} to prove Theorem \ref{Rntangent}. Let $X$ be an Ahlfors $n$-regular Lipschitz differentiability space containing a chart $(U,\phi \colon U\rightarrow\mathbb{R}^n)$ of dimension $n$. Note that, as mentioned above, any tangent $Y$ of $X$ is Ahlfors $n$-regular.

By Corollary \ref{lipquot}, for almost every point $x$ of $U$, there exists $(Y,y)\in \Tan(X,x)$ and a Lipschitz quotient map $G\colon Y\rightarrow\mathbb{R}^n$. It follows immediately from Corollary \ref{lqbilip} that $Y$ is uniformly rectifiable.

For the second part of the theorem, take a positive measure subset $E$ of $Y$ on which $G$ is a bi-Lipschitz map. Fix a point of density $y'$ of this set such that $G(y')$ is a point of $\mathcal{H}^n$-density of $G(E) \subset \mathbb{R}^n$. Take a further tangent $(Z, z, H)\in \Tan(Y, y', G)$. Note that $(Z,z)\in\Tan(Y,y')\subset\Tan(X,x)$ (see \cite{Le11}, Theorem 1.1). In addition, it follows from Lemma \ref{densitytangent} and a standard argument that $H$ is a bi-Lipschitz map from $Z$ onto $\mathbb{R}^n$. To see this, we may use the type of convergence discussed in \cite{DS97}, Chapter 8, to simultaneously blow up $G^{-1}\colon G(E)\rightarrow Y$ at $G(y')$, yielding a bi-Lipschitz map $\overline{H}\colon \mathbb{R}^n \rightarrow Z$ such that $H(\overline{H}(w))=w$ for all $w\in \mathbb{R}^n$. This shows that $H$ is surjective and therefore that $(Z,z)\in \Tan(X,x)$ is bi-Lipschitz equivalent to $\mathbb{R}^n$. This completes the proof of Theorem \ref{Rntangent}.

Corollary \ref{banachtangent} is an immediate consequence of Theorem \ref{Rntangent} and Theorem 9 of \cite{Ki94}, again using the fact \cite{Le11} that, at almost every point of $X$, tangents of tangents are tangents.

\section{Proof of Theorem \ref{strongunrectthm}}\label{unrect}
We now consider an Ahlfors $s$-regular Lipschitz differentiability space $X$ with a $k$-dimensional chart $U$, where $k<s$.

Fix any $N\in\mathbb{N}$ and any Lipschitz function $f\colon U\rightarrow\mathbb{R}^N$. We will show that $\mathcal{H}^s(f(U))=0$.

Without loss of generality, we may assume that $f$ is $1$-Lipschitz, $N\geq s$, and $U$ is bounded. Write $f = (f_1, \dots, f_N)$, where $f_i\colon X\rightarrow \mathbb{R}$ for $1\leq i \leq N$. We say that $f$ is differentiable at $x\in U$ if each $f_i$ is differentiable at $x$. In this case, we write $Df_x$ for the $N\times k$ matrix whose $i$th row is $d(f_i)_x \in \mathbb{R}^k$.

Note that the subset of $U$ on which $f$ is non-differentiable has $\mathcal{H}^s$-measure zero, and thus so does its image under $f$. It therefore suffices to show that $\mathcal{H}^s(f(V))=0$, where $V\subseteq U$ is the subset on which $f$ is differentiable. To do so, it suffices to show that the Hausdorff content $\mathcal{H}^s_\infty$ of $f(V)$ is zero (see \cite{He01}, p. 61).

Fix $\delta>0$. For each $x\in V$, choose $r_x\in(0,1)$ small so that
$$ y\in B(x,6r_x) \Rightarrow |f(y)-f(x)-Df_x\cdot (\phi(y) - \phi(x))| < \delta r_x. $$

By a basic covering theorem, (see \cite{He01}, Theorem 1.2), we may acquire a collection of balls $\{B_j = B(x_j, r_j)\}$, with $x_j\in V$ and $r_j=5r_{x_j}$, covering $V$ such that the collection $\{\frac{1}{5} B_j\}$ consists of pairwise disjoint sets.

Let $P_j$ denote the $k$-dimensional affine space $f(x_j)+Df_{x_j}[\mathbb{R}^k]\subset \mathbb{R}^N$. Then
$$f(B_j) \subset N_{\delta r_j}(B(f(x_j), r_j) \cap P_j). $$ 

Thus, $f(B_j)$ can be covered by $C\delta^{-k}$ balls of radius $2\delta r_j$, where $C$ depends only on $k$ (cover the $k$-dimensional Euclidean ball $B(f(x_j), r_j)\cap P_j$ by balls of radius $\delta r_j$ and then double the radii of these balls).

Note that because $V$ is bounded and the balls $B(x_j, \frac{1}{5} r_j)$ are disjoint, we have that
\begin{equation}\label{rjsum}
\sum_j r_j^s = 5^s \sum_j (r_j/5)^s \leq C_0 5^s \mathcal{H}^s(N_1(V)) < \infty,
\end{equation}
where the first inequality is because the collection $\{B(x_j, r_j/5)\}$ consists of disjoint subsets of $N_1(V)$, and the second inequality is because $V$ is bounded and $X$ is Ahlfors $s$-regular with constant $C_0$.

Thus,
\begin{align*}
\mathcal{H}^s_\infty(f(V)) &\leq C\sum_j \delta^{-k} (2\delta r_j)^s\\
&\leq 2^sC\delta^{s-k} \sum_j r_j^s\\
&\leq 10^sC C_0 \mathcal{H}^s(N_1(V)) \delta^{s-k},
\end{align*}
using (\ref{rjsum}).

Because $s-k>0$ and $\mathcal{H}^s(N_1(V))<\infty$, sending $\delta\rightarrow 0$ completes the proof of Theorem \ref{strongunrectthm}.

\begin{rmk}\label{manifoldcharts}
In \cite{Se96}, Semmes shows that a linearly locally contractible, Ahlfors $n$-regular $n$-manifold $M$ admits a Poincar\'e inequality, and is therefore a Lipschitz differentiability space. Using Theorem \ref{strongunrectthm} above, combined with the deep Theorem 1.29 of \cite{Se96}, one can give a straightforward proof that the differentiable structure of $M$ consists of $n$-dimensional charts. This is done in the following way:

The fact that the charts in $M$ have dimension at most $n$ follows from Theorem 13.8 of \cite{Ch99}, or alternatively from Corollary \ref{assouad} below.

If a chart $U$ in $M$ had dimension $k<n$, then $U$ would be strongly unrectifiable by Theorem \ref{strongunrectthm}. Fixing a point of density $x$ of $U$, Theorem 1.29 of \cite{Se96} provides, for all $j\in\mathbb{N}$, mappings $f_j\colon B(x,j^{-1})\rightarrow \mathbb{S}^n$ that are $Cj$-Lipschitz, for some constant $C$, and whose images have full measure in the standard unit sphere $\mathbb{S}^n$. In other words, we have
$$ \mathcal{H}^n(f_j(B(x,j^{-1}))) = \mathcal{H}^n(\mathbb{S}^n) $$
independent of $j$.
On the other hand, by the strong unrectifiability of $U$ we have
$$ \mathcal{H}^n(f_j(B(x,j^{-1}) \cap U)) = 0. $$
Letting $j$ tend to infinity, one easily obtains a contradiction to the assumption that $x$ is a point of density of $U$.
\end{rmk}

\begin{rmk}
In fact, as was shown by the author in \cite{GCD13}, the spaces described in Remark \ref{manifoldcharts} are locally uniformly rectifiable in dimension $n$, which is much stronger than having an $n$-dimensional differentiable structure.
\end{rmk}

\section{Additional corollaries}\label{additionalcors}
This section contains some further results that follow from Corollary \ref{lipquotfacts} and Theorem \ref{Rntangent}.

\subsection{Embedding and rectifiability}
The fact that blowups of the coordinate functions are surjective (statement (\ref{surjtangent}) in Corollary \ref{lipquotfacts}) appears to be new for Lipschitz differentiability spaces (as opposed to PI spaces, where it appears in Theorem 13.4 of \cite{Ch99}). In this section, we give some consequences of this fact. (While this paper was in preparation, we learned of Schioppa's paper \cite{Sc13}, in Section 5 of which he also proves that the blowups of the coordinate functions are surjective in a Lipschitz differentiability space. The results in this subsection, which are all corollaries of that fact, can thus also be derived from Schioppa's work.)

Exactly as for PI spaces (see \cite{Ch99}, Theorems 14.1 and 14.2), surjectivity of blowups gives the following consequences.

\begin{cor}\label{embedding} 
Let $(X,d,\mu)$ be a complete Lipschitz differentiability space with an $n$-dimensional chart $(U,\phi)$. Suppose that $F\colon X\rightarrow\mathbb{R}^N$ is a bi-Lipschitz embedding. Then for almost every $x\in U$, the set $F(X)$ has a unique tangent at $F(x)$ that is an $n$-dimensional linear subspace of $\mathbb{R}^N$.

If in addition $\mathcal{H}^n(U)<\infty$ and $\mathcal{H}^n$ is absolutely continuous with respect to $\mu$, then it follows that $F(U)$, and therefore $U$, is $n$-rectifiable.
\end{cor}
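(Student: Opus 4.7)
The plan is to blow up $F$ jointly with $\phi$ and then apply Corollary \ref{lipquotfacts}(\ref{surjtangent}). Fix $x\in U$ in the full-$\mu$-measure set where each coordinate function of $F=(F_1,\dots,F_N)$ is differentiable with respect to $\phi$, yielding an $N\times n$ matrix $dF_x$ with
$$F(y)=F(x)+dF_x\cdot(\phi(y)-\phi(x))+o(d(x,y)),$$
and where the conclusions of Corollaries \ref{lipquot} and \ref{lipquotfacts} hold. Treating $(\phi,F)$ as a single $\mathbb{R}^{n+N}$-valued Lipschitz function on $X$, for any sequence $\lambda_i\to 0$ one may (passing to a subsequence) extract a joint $D$-limit
$$\Bigl(\lambda_i^{-1}X,\,x,\,\lambda_i^{-1}(\phi-\phi(x)),\,\lambda_i^{-1}(F-F(x))\Bigr)\longrightarrow (Y,y,G,H).$$
The differentiability relation forces $H=L_x\circ G$, where $L_x\colon\mathbb{R}^n\to\mathbb{R}^N$ is the linear map with matrix $dF_x$. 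Since $F$ is bi-Lipschitz, $H$ is bi-Lipschitz by Lemma \ref{GHproperties}, so $L_x$ is injective. Combined with the surjectivity of $G$ provided by Corollary \ref{lipquotfacts}(\ref{surjtangent}), this yields $H(Y)=L_x(\mathbb{R}^n)$, an $n$-dimensional linear subspace of $\mathbb{R}^N$.

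I would then identify $H(Y)$ with the tangent of the set $F(X)\subseteq\mathbb{R}^N$ at $F(x)$. The bi-Lipschitz nature of $F$ implies that the rescaled subsets $\lambda_i^{-1}(F(X)-F(x))\subseteq\mathbb{R}^N$, based at the origin, converge in pointed Hausdorff distance within $\mathbb{R}^N$ to $H(Y)$; the $\epsilon$-isometries supplied by $D$-convergence can be post-composed with the Euclidean embedding $\lambda_i^{-1}(F-F(x))$ to make this precise via Lemma \ref{halfconvergence}. Because $L_x$ depends only on the derivative $dF_x$ and not on the sequence, the tangent is independent of $\lambda_i$ and equals the $n$-plane $L_x(\mathbb{R}^n)$, establishing the first assertion.

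For the rectifiability conclusion, the good set above is $\mu$-conull in $U$ and hence $\mathcal{H}^n$-conull by the hypothesis $\mathcal{H}^n\ll\mu$; its image under $F$ is then $\mathcal{H}^n$-conull in $F(U)$ since $F$ is Lipschitz. Thus at $\mathcal{H}^n$-a.e.\ point of $F(U)$, $F(X)$ admits a unique $n$-dimensional linear tangent, and $\mathcal{H}^n(F(U))\leq\LIP(F)^n\mathcal{H}^n(U)<\infty$. The classical characterization of rectifiability via approximate tangent $n$-planes at almost every point of a set of finite $\mathcal{H}^n$-measure (e.g.\ Mattila's book) then gives that $F(U)$ is $n$-rectifiable, and bi-Lipschitz invariance of rectifiability transfers this conclusion back to $U$.

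The main obstacle I anticipate is verifying that the abstract metric-space limit $H(Y)$ really coincides with the honest Hausdorff tangent of the \emph{set} $F(X)$ at $F(x)$ in $\mathbb{R}^N$. The argument is essentially bookkeeping, using that $F$ is bi-Lipschitz to convert $\epsilon$-isometries from $\lambda_i^{-1}X$ onto $Y$ into $O(\epsilon)$-Hausdorff closeness of the corresponding rescaled images in Euclidean space, but keeping the metric-space and point-set pictures aligned requires some care.
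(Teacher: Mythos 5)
Your proof is essentially correct and follows the same route as the paper: simultaneously blow up $\phi$ and $F$ at a good point, use the differentiability identity to factor the blow-up of $F$ as $L_x\circ G$, invoke the surjectivity of $G$ from Corollary \ref{lipquotfacts}(\ref{surjtangent}) to get $H(Y)=L_x(\mathbb{R}^n)$, and read off uniqueness of the tangent plane from $dF_x$; the rectifiability conclusion via Mattila's Theorem 15.19 is also what the paper does. Two small points you should make explicit. First, the factorization $H=L_x\circ G$ requires that the blow-up be computed along $U$, since differentiability of $F$ with respect to $\phi$ is a statement on $U$; you get this for free by additionally requiring $x$ to be a density point of $U$ and invoking Lemma \ref{densitytangent}, which the paper states and which you should not omit. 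Second, your ``bookkeeping'' step identifying $H(Y)$ with the local Hausdorff limit of $\lambda_i^{-1}(F(X)-F(x))$ is indeed routine (both inclusions follow by pushing the $\epsilon_i$-isometries through the uniformly bi-Lipschitz maps $\lambda_i^{-1}(F-F(x))$ and $H$), and it lets you avoid the paper's explicit realization of the tangent $Z$ as a subset of $\mathbb{R}^N$ via \cite{DS97}, Lemma 8.2, and the separate blow-up of $F^{-1}$ to establish surjectivity of $\tilde{F}$; this is a mild streamlining, not a different idea. Once those two details are supplied, the argument is complete.
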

\begin{proof}
The proof proceeds as for PI spaces. Consider any point of density $x$ of $U$ at which $F$ is differentiable; we may also assume that $F(x)$ is a point of $F_*(\mu)$-density of $F(U)$. Note that the push-forward measure $F_*(\mu)$ is doubling on $F(X)$, as $F$ is bi-Lipschitz.

Take a tangent $Z\subset\mathbb{R}^N$ of $F(U)$ at $F(x)$ along some sequence of scales. Note that this tangent $Z$ can be realized as a subset of $\mathbb{R}^N$, as in \cite{DS97}, Lemma 8.2.

Simultaneously blow up $X$ and the maps $\phi$ and $F$ at $x$ to obtain a tangent $Y$ of $X$ at $x$, a Lipschitz map $\tilde{\phi}\colon Y\rightarrow\mathbb{R}^n$, and a bi-Lipschitz map $\tilde{F}\colon Y\rightarrow Z\subset\mathbb{R}^N$. To summarize, we obtain, along some fixed sequence of scales,
$$ (Z,z)\in \Tan(F(U), F(x)) = \Tan(F(X),F(x)),$$
$$ (Y,y,\tilde{F})\in \Tan(X,x,F), \text{ and }$$
$$ (Y,y,\tilde{\phi}) \in \Tan(X,x,\phi),$$
where $\tilde{F}$ is a bi-Lipschitz map of $Y$ into $Z$. (The fact noted above that $\Tan(F(U), F(x)) = \Tan(F(X),F(x))$ follows from Lemma \ref{densitytangent} and the fact that $F_*(\mu)$ is doubling.)

In fact, by the same standard argument used in the proof of Theorem \ref{Rntangent}, $\tilde{F}$ maps $Y$ onto $Z$. Recall that we show this by using the type of convergence discussed in \cite{DS97}, Chapter 8, to simultaneously blow up $F^{-1}\colon F(U)\rightarrow U$ at $F(x)$, yielding a bi-Lipschitz map $G\colon Z\rightarrow Y$ such that $\tilde{F}(G(w))=w$ for all $w\in Z$. This shows that $\tilde{F}$ is surjective.

Now, because $F$ is differentiable at $x$, its blowup $\tilde{F}$ can be written as
$$\tilde{F} = DF_x \circ \tilde{\phi},$$
where $DF_x$ is a linear map $\mathbb{R}^n\rightarrow\mathbb{R}^N$ and $\tilde{\phi}$ is the blowup of $\phi$. Note that $DF_x$ must have full rank $n$, because $\tilde{F}$ is a bi-Lipschitz map.

Because $\tilde{\phi}$ is surjective (see Corollary \ref{lipquotfacts}), we have
$$ Z = F(Y) = DF_x(\tilde{\phi}(Y)) = DF_x(\mathbb{R}^n), $$
which is a fixed $n$-dimensional linear subspace of $\mathbb{R}^N$ independent of the choice of scales defining the tangent $Z$. This shows that for $\mu$-almost every $x\in U$, $F(U)$ has a unique tangent at $F(x)$ that is an $n$-dimensional linear subspace of $\mathbb{R}^N$.

The second part of the corollary, about the rectifiability of $U$, now follows from a well-known characterization of rectifiable sets in Euclidean space (see \cite{Ma95}, Theorem 15.19).
\end{proof}

\begin{rmk}\label{CsornyeiJones}
In the second part of Corollary \ref{embedding}, note that if $\mathcal{H}^n(U)=0$ then the $n$-rectifiability of $U$ holds for trivial reasons. However, if one appeals to a result recently announced by Cs\"ornyei and Jones, it is possible to show that an $n$-dimensional chart $U$ always has $\mathcal{H}^n(U)>0$ (see \cite{Ba13}, Remark 6.11).
\end{rmk}

Note that the proof of Corollary \ref{embedding} shows that, if $X$ is a Lipschitz differentiability space with an $n$-dimensional chart $(U,\phi)$, and if $F\colon X\rightarrow \mathbb{R}^N$ is bi-Lipschitz, then at almost every point $x\in U$, every tangent of $X$ at $x$ is bi-Lipschitz equivalent to $\mathbb{R}^n$. The following non-embedding result is therefore an immediate consequence.

\begin{cor}\label{nonembedding}
Let $X$ be a complete Lipschitz differentiability space with an $n$-dimensional chart $(U,\phi)$. Suppose there exists a set $A \subseteq U$ of positive measure such that for every $a\in A$, there exists $(Y,y)\in \Tan(X,a)$ that is \textit{not} bi-Lipschitz equivalent to $\mathbb{R}^n$. Then $X$ does not admit a bi-Lipschitz embedding into any Euclidean space.
\end{cor}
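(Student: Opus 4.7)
The plan is to prove this by contraposition, using essentially no new work beyond what was already established in Corollary \ref{embedding} and its proof. Suppose, toward a contradiction, that $X$ does admit a bi-Lipschitz embedding $F\colon X \to \mathbb{R}^N$ for some $N$. I will show this forces the tangent of $X$ at almost every point of $U$ to be bi-Lipschitz equivalent to $\mathbb{R}^n$, which contradicts the hypothesis on the positive-measure set $A$.

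The key observation is the strengthening of Corollary \ref{embedding} noted in the paragraph immediately preceding the statement: the argument there did not merely produce \emph{one} tangent of $F(U)$ at $F(x)$ that is an $n$-plane, but in fact showed that \emph{every} tangent $(Z,z) \in \Tan(F(U),F(x))$ arises as $DF_x(\tilde\phi(Y))$ for some corresponding tangent $(Y,y,\tilde\phi) \in \Tan(X,x,\phi)$, and that the bi-Lipschitz blow-up $\tilde F \colon Y \to Z$ is surjective. Since $DF_x$ is a linear injection of rank $n$ (it comes from differentiating the bi-Lipschitz $F$), the composition $\tilde F = DF_x \circ \tilde\phi$ being bi-Lipschitz onto $Z$ forces the underlying tangent $Y$ to be bi-Lipschitz equivalent to $Z \cong \mathbb{R}^n$ via $\tilde F$.

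So the plan is: first, fix the almost-everywhere set of points $x \in U$ at which (i) $F$ is differentiable, (ii) $x$ is a density point of $U$, (iii) $F(x)$ is a density point of $F(U)$ with respect to the (doubling) pushforward measure $F_*\mu$, and (iv) the conclusion of Corollary \ref{lipquotfacts}(\ref{surjtangent}) holds, so that every tangent of $\phi$ at $x$ is surjective onto $\mathbb{R}^n$. At such a point, the argument recalled above shows that every $(Y,y)\in\Tan(X,x)$ is bi-Lipschitz equivalent to $\mathbb{R}^n$. Second, since this full-measure set intersects the positive-measure set $A$, pick any $a$ in the intersection; then by construction every tangent of $X$ at $a$ is bi-Lipschitz to $\mathbb{R}^n$, directly contradicting the hypothesis that $A$ contains a tangent not bi-Lipschitz to $\mathbb{R}^n$ at every point.

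There is essentially no obstacle here — the work has all been done in Corollary \ref{embedding}. The only mild subtlety is being careful that the statement ``every tangent is bi-Lipschitz to $\mathbb{R}^n$'' really does follow from the proof of Corollary \ref{embedding} (not just the statement, which only addresses uniqueness up to a linear subspace). This is exactly the content of the unnumbered remark preceding the corollary, and the surjectivity of $\tilde F$ via blowing up $F^{-1}$ is the crucial input that makes it work.
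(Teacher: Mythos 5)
Your proposal is correct and takes essentially the same route as the paper: invoke the strengthening noted in the remark preceding the corollary (that the proof of Corollary \ref{embedding} in fact shows \emph{every} tangent of $X$ at almost every $x\in U$ is bi-Lipschitz to $\mathbb{R}^n$ when a bi-Lipschitz embedding exists), and then observe that the resulting full-measure set must meet the positive-measure set $A$, yielding a contradiction.
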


The previous two results greatly restrict the subsets of Euclidean space that can admit differentiable structures. For example, we obtain the following non-existence result. Here $|\cdot|$ refers to the standard Euclidean metric.

\begin{cor}\label{Eucsubsets}
Let $E$ be a closed subset of some $\mathbb{R}^N$ that is Ahlfors $s$-regular, where $0<s\leq N$. If $s$ is not an integer, then $(E, |\cdot|, \mathcal{H}^s)$ is not a Lipschitz differentiability space.
\end{cor}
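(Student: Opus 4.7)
The plan is to derive Corollary \ref{Eucsubsets} as a direct consequence of Theorem \ref{strongunrectthm}, applied to the inclusion map of $E$ into its ambient Euclidean space. Suppose for contradiction that $(E,|\cdot|,\mathcal{H}^s)$ is a Lipschitz differentiability space, and let $(U,\phi\colon U\to\mathbb{R}^k)$ be any chart. By Remark \ref{upperbound} (via Corollary \ref{assouad}), the chart dimension satisfies $k\leq s$; since $k$ is a positive integer and $s$ is not, in fact $k<s$.

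By Theorem \ref{strongunrectthm}, the chart $U$ is then strongly $s$-unrectifiable. Now consider the inclusion $\iota\colon U\hookrightarrow\mathbb{R}^N$ obtained by restricting the ambient identity map; this is $1$-Lipschitz because the metric on $E$ is the restriction of the Euclidean metric. Strong $s$-unrectifiability then forces
\[
\mathcal{H}^s(\iota(U)) = 0.
\]
Since Hausdorff measure is intrinsic (it depends only on the metric, not on the ambient space), this equals $\mathcal{H}^s(U)$ computed inside $E$. However, since $E$ is Ahlfors $s$-regular and $U$ has positive $\mathcal{H}^s$-measure (being a chart), we obtain a contradiction.

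There is essentially no serious obstacle here: the only thing to check carefully is the interplay between the two roles of $\mathcal{H}^s$, namely as the reference measure on $E$ and as the target measure for the strong unrectifiability statement. These coincide precisely because $E$ sits isometrically inside $\mathbb{R}^N$, so the inclusion plays the role of the Lipschitz map $f$ in Definition \ref{strongunrectdef}. The arithmetic observation that a positive integer $k$ bounded by a non-integer $s$ must satisfy $k<s$ is what allows Theorem \ref{strongunrectthm} to apply to \emph{every} chart, which in turn prevents $E$ from admitting any charts at all and so completes the proof.
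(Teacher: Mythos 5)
Your proof is correct, and it takes a genuinely different route from the paper. The paper argues through tangents: it applies Corollary \ref{embedding} to the inclusion $E\hookrightarrow\mathbb{R}^N$ to conclude that at almost every point some tangent of $E$ is a linear subspace of $\mathbb{R}^N$, hence of integer Hausdorff dimension, whereas every tangent of an Ahlfors $s$-regular space is Ahlfors $s$-regular with $s\notin\mathbb{Z}$. You instead go through Theorem \ref{strongunrectthm}: first you observe, via Corollary \ref{assouad} and the fact that a complete Ahlfors $s$-regular space has Assouad dimension $s$, that any chart dimension $k$ satisfies $k\leq s$ and hence $k<s$ since $k\in\mathbb{Z}$ and $s\notin\mathbb{Z}$; then strong $s$-unrectifiability of the chart $U$, tested against the $1$-Lipschitz inclusion $\iota\colon U\hookrightarrow\mathbb{R}^N$, forces $\mathcal{H}^s(U)=\mathcal{H}^s(\iota(U))=0$, contradicting the requirement that a chart have positive measure with respect to the reference measure $\mathcal{H}^s$. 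Your version is arguably the more direct of the two: it avoids the tangent-space and blowup machinery entirely (Section \ref{tangentsection} and Corollary \ref{embedding}) and rests only on the elementary differentiation argument behind Theorem \ref{strongunrectthm} plus the dimension bound of Corollary \ref{assouad}. The paper's version, while using heavier tools, shows something a bit stronger along the way (that $E$ would have a tangent equal to a linear subspace of $\mathbb{R}^N$), which fits its narrative about Euclidean tangents. Both are complete proofs of the corollary.
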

\begin{proof}
Suppose that $Q$ is not an integer but that $(E, |\cdot|, \mathcal{H}^Q)$ is in fact a Lipschitz differentiability space. Because $E$ is Ahlfors $s$-regular, so are all its tangents. On the other hand, by Corollary \ref{embedding}, some tangent of $E$ must be a linear subspace of $\mathbb{R}^N$, and so must have integer Hausdorff dimension. This is a contradiction.
\end{proof}

In particular, many self-similar fractals like the standard Sierpi\'nski carpet and Sierpi\'nski gasket cannot be Lipschitz differentiability spaces when equipped with their canonical measures. In general, Ahlfors regular spaces with non-integer Hausdorff dimension can be Lipschitz differentiability spaces and can even admit Poincar\'e inequalities (see \cite{BP99}, \cite{La00}, \cite{CK13_PI}). Such spaces can never admit bi-Lipschitz embeddings into any Euclidean space. Indeed, in the case of PI spaces, stronger non-embedding results hold (see \cite{CK09}).

Surjectivity of the blowups also implies a sharp bound on the dimension of a differentiable structure on a doubling space. This uses the notion of the Assouad dimension $\dim_A X$ of a metric space $X$; a definition can be found in \cite{He01}, Definition 10.15.

\begin{cor}\label{assouad}
Let $X$ be a doubling Lipschitz differentiability space with an $n$-dimensional chart $(U,\phi)$. Then $n\leq \dim_A X$.
\end{cor}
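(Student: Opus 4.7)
The plan is to leverage Corollary \ref{lipquot}, which produces, at almost every point $x \in U$, a tangent space-function $(Y,y,\tilde{\phi}) \in \Tan(X,x,\phi)$ for which $\tilde{\phi}\colon Y \to \mathbb{R}^n$ is a Lipschitz quotient map. Since $\dim_A \mathbb{R}^n = n$, the strategy is to push this value along the chain
\[
n = \dim_A \mathbb{R}^n \;\leq\; \dim_A Y \;\leq\; \dim_A X,
\]
where the two inequalities come, respectively, from the quotient property of $\tilde{\phi}$ and from the behavior of Assouad dimension under Gromov-Hausdorff tangents.

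First I would argue that $\dim_A \mathbb{R}^n \leq \dim_A Y$. Say $\tilde{\phi}$ has Lipschitz constant $L$ and co-Lipschitz constant $c$. If $\dim_A Y < s$, then for some constant $C$ every ball $B(y,R) \subset Y$ can be covered by at most $C(R/r)^s$ balls of radius $r$. Applying $\tilde{\phi}$ to such a cover produces a cover of $\tilde{\phi}(B(y,R))$ by $C(R/r)^s$ balls of radius $Lr$ in $\mathbb{R}^n$. Since $\tilde{\phi}(B(y,R)) \supseteq B(\tilde{\phi}(y), cR)$, setting $R' = cR$ and $r' = Lr$ shows that every ball of radius $R'$ in $\mathbb{R}^n$ is covered by $C(L/c)^s (R'/r')^s$ balls of radius $r'$. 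Hence $\dim_A \mathbb{R}^n \leq s$, and taking the infimum yields $n \leq \dim_A Y$.

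Next I would verify that $\dim_A Y \leq \dim_A X$. Assouad dimension is invariant under rescaling: if $X$ has the covering property above with exponent $s$ and constant $C$, then so does every rescaling $\lambda_j^{-1} X$, with the same $C$ and $s$. This covering property then passes to pointed Gromov-Hausdorff limits by a routine argument using the $\epsilon$-isometries defining convergence, because cover elements of the $\lambda_j^{-1} X$ can be transferred to the limit space with only an arbitrarily small additive error in radii. Therefore $\dim_A Y \leq \dim_A X$. Concatenating the two inequalities gives the conclusion.

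The main technical point, though it is mild, is the passage of the Assouad covering condition from the rescalings to the tangent; this is where one must be careful that the definition of $\epsilon$-isometry used in Section \ref{GHspacefunctions} is strong enough (it is, because Assouad covering is a quantitative condition depending only on ratios of radii, which are preserved up to arbitrarily small error). Everything else is immediate from the facts already proved in the paper.
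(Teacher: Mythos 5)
Your proof is correct and follows the same overall skeleton as the paper's: produce a tangent $(Y,y)$ with a nice map to $\mathbb{R}^n$, then chain $n \leq \dim_A Y \leq \dim_A X$, using for the second inequality the fact that Assouad dimension does not increase under passage to a Gromov--Hausdorff tangent. You differ in how you obtain the first inequality. The paper's proof uses only the weaker ingredient that the blown-up chart is a Lipschitz \emph{surjection} $Y \to \mathbb{R}^n$ (Corollary \ref{lipquotfacts}(i)), from which $\dim_H Y \geq n$ follows because Lipschitz maps cannot increase Hausdorff dimension, and then invokes $\dim_A \geq \dim_H$. You instead invoke the full strength of Corollary \ref{lipquot} --- the Lipschitz \emph{quotient} property --- and transfer the Assouad covering condition directly through the quotient map: a cover of $B(y,R)$ by $N$ balls of radius $r$ in $Y$ pushes forward to a cover of $B(\tilde\phi(y),cR)$ by $N$ balls of radius $Lr$ in $\mathbb{R}^n$, so $\dim_A\mathbb{R}^n \leq \dim_A Y$. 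Both arguments are sound. Your version is more self-contained (it never needs to compare Assouad and Hausdorff dimensions and sketches rather than cites the monotonicity of $\dim_A$ under tangents), but it uses a stronger hypothesis than necessary; the paper's version is shorter because it outsources both steps to standard references and gets by with mere surjectivity.
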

\begin{proof}
This follows from two facts about Assouad dimension. First, the Assouad dimension of a space $X$ is always at least the Hausdorff dimension $\dim_H X$ of $X$ (see \cite{MT10}, Section 1.4.4). Second, the Assouad dimension of a tangent space is always at most the Assouad dimension of the original space (\cite{MT10}, Proposition 6.1.5).

Because in addition the blowups of the coordinates yield a Lipschitz map from a tangent $Y$ of $X$ onto $\mathbb{R}^n$, we have that
$$ \dim_A X \geq \dim_A Y \geq \dim_H Y \geq n. $$
\end{proof}

Note that, for example, the Assouad dimension of $\mathbb{R}^n$ is the same as the dimension of its differentiability charts, so Corollary \ref{assouad} is sharp. Corollary \ref{assouad} was first noted by Schioppa in \cite{Sc13}, Section 5.

\subsection{Spaces with quasi-M\"obius symmetries}\label{qmsymmetry}

In \cite{BK02_rigidity}, Bonk and Kleiner consider compact metric spaces that admit the following type of symmetries. For the definition of quasi-M\"obius maps, see \cite{BK02_rigidity}.

\begin{definition}\label{qms}
A compact metric space $X$ \textit{admits quasi-M\"obius symmetries} if the following holds: every triple of points in $X$ can be blown up to a uniformly separated triple by a uniformly quasi-M\"obius map. In other words, there is a homeomorphism $\eta\colon [0,\infty)\rightarrow[0,\infty)$ and a constant $\delta>0$ such that for every triple of points $x,y,z\in X$, there is a $\eta$-quasi-M\"obius map $g \colon X\rightarrow X$ such that the points $g(x)$, $g(y)$, $g(z)$ have mutual distance at least $\delta$.
\end{definition}

This condition is satisfied, for example, by the boundaries of hyperbolic groups equipped with their visual metrics.

Bonk and Kleiner show the following theorem. 
\begin{thm}[\cite{BK02_rigidity}, Theorem 6.1]
If a compact Ahlfors $n$-regular metric space $X$ admits quasi-M\"obius symmetries and in addition has topological dimension $n$, then $X$ is quasi-M\"obius equivalent to the standard sphere $\mathbb{S}^n$.
\end{thm}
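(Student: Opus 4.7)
The plan is to exploit the quasi-Möbius group action to reduce the global statement to a local rigidity question, extract a structured tangent at a generic point, and then patch local models into a global quasi-Möbius equivalence with $\mathbb{S}^n$ using the symmetries.

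First I would observe that the quasi-Möbius symmetries let one compare the local geometry at any two points of $X$ with uniform constants. Given $p,q \in X$, one picks a third point $z$ far from both and applies a uniformly $\eta$-quasi-Möbius self-map sending $(p,q,z)$ to a $\delta$-separated triple. Because $z$ is kept uniformly far away, the restriction of the self-map to small neighborhoods of $p$ is uniformly quasisymmetric, so that small balls around any two points of $X$ are related by a uniformly quasisymmetric homeomorphism. This ``quasi-homogeneity'' reduces the problem to establishing good local structure at a single generic point.

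Next, since $X$ is doubling, I would extract a pointed Gromov-Hausdorff tangent $(Y,y)$ at a generic point. The tangent inherits Ahlfors $n$-regularity (Lemma \ref{GHproperties}), and a separate argument, using the quasi-homogeneity above together with the stability of topological dimension under quasisymmetric homeomorphisms, should show that $Y$ has topological dimension $n$ and is linearly locally contractible. Under these hypotheses one can appeal to a quasisymmetric uniformization principle to produce a quasisymmetric homeomorphism $Y \to \mathbb{R}^n$: for $n=2$ this is the Bonk-Kleiner parametrization of the sphere, and for general $n$ one would need an analogous uniformization theorem tailored to the additional structure coming from the symmetry.

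Finally, combining the uniformization of the tangent with the quasi-homogeneity, I would cover $X$ by finitely many balls, realize each as quasisymmetrically equivalent to a ball in $\mathbb{R}^n$, and glue the local models via transition maps coming from the symmetry group to produce a global quasi-Möbius homeomorphism $X \to \mathbb{S}^n$ (using one-point compactification of $\mathbb{R}^n$). The main obstacle is undoubtedly the uniformization step: going from Ahlfors $n$-regularity, topological dimension $n$, and linear local contractibility to a quasisymmetric parametrization by $\mathbb{R}^n$ is a deep problem, and verifying from the symmetry assumption alone that one has enough regularity (e.g.\ the right linear local connectedness and Loewner-type properties) to invoke such a uniformization theorem is the technical heart of the argument.
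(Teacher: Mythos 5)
This theorem is not proved in the paper at all: it is stated verbatim as a citation of \cite{BK02_rigidity}, Theorem 6.1, so there is no proof here to compare your attempt against. The paper's own contribution in this neighborhood is the corollary that immediately follows, where the hypothesis ``topological dimension $n$'' is replaced by ``contains a chart of dimension $n$''; that corollary is proved by invoking Theorem \ref{Rntangent} to obtain a tangent of $X$ bi-Lipschitz equivalent to $\mathbb{R}^n$, and then citing Bonk--Kleiner's Lemma 5.8 to upgrade that single tangent to a global quasi-M\"obius parametrization by $\mathbb{S}^n$.

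As a proof of the Bonk--Kleiner theorem itself, your proposal has a genuine gap at its central step. You propose to show that a tangent $(Y,y)$ is Ahlfors $n$-regular, linearly locally contractible, and of topological dimension $n$, and then appeal to a ``quasisymmetric uniformization principle'' to parametrize $Y$ by $\mathbb{R}^n$. Such a principle is available only for $n=2$ (the Bonk--Kleiner sphere uniformization theorem); for $n\geq 3$ it is known to fail, by Semmes-type examples of Ahlfors $n$-regular, linearly locally contractible metric $n$-spheres that admit no quasisymmetric parametrization by $\mathbb{S}^n$. You flag this yourself as ``the technical heart,'' but since no such tailored uniformization theorem is known or supplied, the argument does not go through. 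The actual Bonk--Kleiner proof avoids uniformization entirely: the topological-dimension-$n$ hypothesis together with Ahlfors $n$-regularity is used directly (via a measure-theoretic argument involving Lipschitz maps to $\mathbb{R}^n$ with positive-measure image) to produce a weak tangent bi-Lipschitz equivalent to $\mathbb{R}^n$, after which the same tangent-to-global mechanism (their Lemma 5.8) that David cites takes over. Your ``quasi-homogeneity'' observation in the first paragraph is correct and is indeed part of their toolkit, but the final gluing step as you describe it --- covering $X$ by quasisymmetrically standard balls and patching via the group --- is also not justified as stated; Lemma 5.8 accomplishes the globalization by a more careful blow-down from a single tangent rather than by patching local charts.
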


In other words, if a space admits quasi-M\"obius symmetries and has extremal topological dimension, then it must be the standard sphere.

An immediate consequence of Theorem \ref{Rntangent} and the methods of \cite{BK02_rigidity} is the following alternate version of Bonk and Kleiner's result, in which the assumption of extremal topological dimension is replaced by the assumption of extremal ``differentiability dimension'':

\begin{cor}
Let $X$ be a compact Ahlfors $n$-regular Lipschitz differentiability space containing a chart $U$ of dimension $n$. Suppose that $X$ admits quasi-M\"obius symmetries, as in Definition \ref{qms}. Then $X$ is quasi-M\"obius equivalent to $\mathbb{S}^n$.
\end{cor}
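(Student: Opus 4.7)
The plan is to adapt the proof of \cite{BK02_rigidity}, Theorem 6.1, replacing the hypothesis of extremal topological dimension with the conclusion of Theorem \ref{Rntangent}. In Bonk and Kleiner's argument, the role of the topological dimension hypothesis is precisely to guarantee that $X$ contains a weak tangent bi-Lipschitz (equivalently, quasisymmetrically) equivalent to $\mathbb{R}^n$. Once such a tangent is available, the remainder of their argument uses only the Ahlfors $n$-regularity and the quasi-Möbius symmetries to produce the quasi-Möbius uniformization to $\mathbb{S}^n$.

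Concretely, I would first invoke Theorem \ref{Rntangent} to obtain, at $\mathcal{H}^n$-almost every $x\in U$, a tangent $(Y,y)\in\Tan(X,x)$ that is bi-Lipschitz equivalent to $\mathbb{R}^n$. The quasi-Möbius symmetries would then be used to ``spread'' this weak tangent: for any point $z\in X$, one applies uniformly quasi-Möbius self-maps $g_j\colon X\to X$ chosen so that $g_j(x_0)\to z$ for a fixed good basepoint $x_0$, while keeping a reference triple uniformly separated. Since uniformly quasisymmetric maps induce uniformly quasisymmetric maps on Gromov-Hausdorff tangents, one concludes that $X$ admits, at every point, a weak tangent quasisymmetrically equivalent to $\mathbb{R}^n$. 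Compactness, Ahlfors regularity, and the symmetries also imply, as in \cite{BK02_rigidity}, that $X$ is doubling, uniformly perfect, and linearly locally contractible. One then feeds all of this data into the uniformization theorem of Bonk and Kleiner to deduce that $X$ is quasi-Möbius equivalent to $\mathbb{S}^n$.

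The main obstacle is verifying that the Bonk-Kleiner argument genuinely depends only on the existence of a quasisymmetric-$\mathbb{R}^n$ weak tangent (together with the symmetries and Ahlfors regularity), rather than on the topological dimension assumption in a more essential way. Most delicate is the step that identifies $X$ with a topological $n$-sphere: in \cite{BK02_rigidity} this is immediate from the topological dimension hypothesis, whereas here it must be extracted from the bi-Lipschitz tangent using the symmetries to transport a Euclidean patch back into $X$. Checking this carefully, while keeping track of quasisymmetric distortion under the blow-down, is the part of the argument that requires the most attention; the rest is a direct application of the results already in \cite{BK02_rigidity}.
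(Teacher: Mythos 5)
Your starting point is exactly the paper's: invoke Theorem \ref{Rntangent} to produce, at a point of $U$, a tangent $(Y,y)$ bi-Lipschitz equivalent to $\mathbb{R}^n$, and then appeal to the Bonk--Kleiner machinery. However, you then propose to re-derive the rest of Bonk and Kleiner's Theorem 6.1 argument from scratch, and you explicitly flag a step you have not verified --- namely, identifying $X$ as a topological $n$-sphere from the existence of a bi-Lipschitz-$\mathbb{R}^n$ weak tangent. As written, that is a genuine gap: you acknowledge uncertainty about whether the Bonk--Kleiner argument ``genuinely depends only on the existence of a quasisymmetric-$\mathbb{R}^n$ weak tangent,'' and you leave the ``most delicate'' step unfinished.

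The gap is closed not by redoing the argument but by citing the right statement: Lemma 5.8 of \cite{BK02_rigidity} (together with the remark in the proof of their Theorem 6.1) already isolates exactly the implication you need. In their Theorem 6.1, the topological dimension hypothesis serves \emph{only} to produce a weak tangent bi-Lipschitz equivalent to $\mathbb{R}^n$; everything else --- the use of the quasi-M\"obius symmetries to spread the tangent around, the topological identification, and the final uniformization to $\mathbb{S}^n$ --- is packaged in Lemma 5.8 and applies verbatim once such a tangent is supplied by Theorem \ref{Rntangent}. So your proposal is the right approach with the right input, but you are missing the specific citation that makes the second half a one-line deduction rather than a reconstruction; without it the argument is incomplete at precisely the point you flag.
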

\begin{proof}
By Theorem \ref{Rntangent}, $X$ admits a tangent $Y$ that is bi-Lipschitz equivalent to $\mathbb{R}^n$. It follows from Lemma 5.8 of \cite{BK02_rigidity} (see also the remark in the proof of Theorem 6.1 of that paper) that $X$ is quasi-M\"obius equivalent to $\mathbb{S}^n$.
\end{proof}

The assumption that $X$ is a Lipschitz differentiability space is strong, but it is somewhat natural in this context: In \cite{BK05}, Bonk and Kleiner show that if an Ahlfors regular space admits quasi-M\"obius symmetries with no common fixed point and in addition is extremal for conformal dimension, then it supports a Poincar\'e inequality and is therefore a Lipschitz differentiability space.

\bibliography{unifiedbib}{}
\bibliographystyle{plain}

\end{document}